\DeclareMathOperator{\VHC}{\mathsf{VHC}}
\newtheorem{theorem}{Theorem}[section]
\newtheorem{lemma}{Lemma}[section]
\newtheorem{proposition}{Proposition}[section]
\newtheorem{corollary}{Corollary}[section]
\newtheorem{conjecture}{Conjecture}[section]
\newtheorem{question}{Question}[section]
\theoremstyle{definition}
\newtheorem{definition}{Definition}[section]
\newtheorem{example}{Example}[section]
\begin{document}
\title{Fertility Numbers}

\author{Colin Defant$^1$}
\address{$^1$Princeton University}
\email{cdefant@princeton.edu}

\begin{abstract}
A nonnegative integer is called a \emph{fertility number} if it is equal to the number of preimages of a permutation under West's stack-sorting map. We prove structural results concerning permutations, allowing us to deduce information about the set of fertility numbers. In particular, the set of fertility numbers is closed under multiplication and contains every nonnegative integer that is not congruent to $3$ modulo $4$. We show that the lower asymptotic density of the set of fertility numbers is at least $1954/2565\approx 0.7618$. We also exhibit some positive integers that are not fertility numbers and conjecture that there are infinitely many such numbers.   
\end{abstract}

\maketitle

\bigskip

\section{Introduction}

Throughout this article, the word ``permutation" refers to a permutation of a finite set of positive integers. We write permutations as words in one-line notation. Let $S_n$ denote the set of permutations of $\{1,\ldots,n\}$. We say a permutation is \emph{normalized} if it an element of $S_n$ for some $n$ (e.g., the permutation $12547$ is not normalized). 

The study of permutation patterns, which has now developed into a vast area of research, began with Knuth's investigation of stack-sorting in \cite{Knuth}. In his 1990 Ph.D. thesis, Julian West \cite{West} explored a deterministic variant of Knuth's stack-sorting algorithm, which we call the \emph{stack-sorting map}. This map, denoted $s$, is defined as follows.

Assume we are given an input permutation $\pi=\pi_1\cdots\pi_n$. Throughout this algorithm, if the next entry in the input permutation is smaller than the entry at the top of the stack or if the stack is empty, the next entry in the input permutation is placed at the top of the stack. Otherwise, the entry at the top of the stack is annexed to the end of the growing output permutation. This procedure stops when the output permutation has length $n$. We then define $s(\pi)$ to be this output permutation. Figure \ref{Fig1} illustrates this procedure and shows that $s(4162)=1426$.  

\begin{figure}[h]
\begin{center}
\includegraphics[width=1\linewidth]{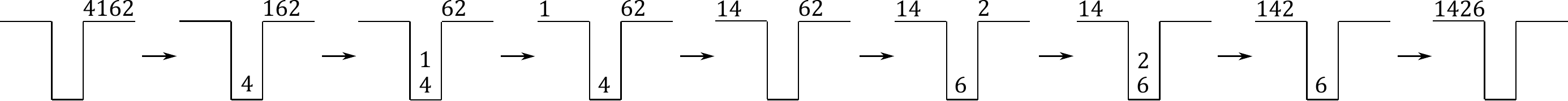}
\end{center}  
\caption{The stack-sorting map $s$ sends $4162$ to $1426$.}
\end{figure}\label{Fig1}

There is an alternative recursive description of the stack-sorting map. Specifically, if $m$ is the largest entry appearing in the permutation $\pi$, we can write $\pi=LmR$, where $L$ and $R$ are the substrings of $\pi$ appearing to the left and right of $m$, respectively. Then $s(\pi)=s(L)s(R)m$. For example, $s(4162)=s(41)s(2)6=s(41)26=s(1)426=1426$. It is also possible to describe the stack-sorting algorithm in terms of in-order readings and postorder readings of decreasing binary plane trees \cite{Bona, Defant}. 

West defined the \emph{fertility} of a permutation $\pi$ to be $|s^{-1}(\pi)|$, the number of preimages of $\pi$ under the stack-sorting map \cite{West}. He proceeded to compute the fertilities of the permutations of the forms \[23\cdots k1(k+1)\cdots n,\quad 12\cdots(k-2)k(k-1)(k+1)\cdots n,\quad\text{and}\quad k12\cdots(k-1)(k+1)\cdots n.\] Bousquet-M\'elou then defined a \emph{sorted} permutation to be a permutation that has positive fertility \cite{Bousquet}; she provided an algorithm for determining whether or not a given permutation is sorted. She also mentioned that it would be interesting to find a method for computing the fertility of any given permutation. The current author found such a method in \cite{Defant}. In fact, the results in that paper are even more general; they allow one to enumerate certain types of decreasing plane trees that have a given permutation as their postorder readings. The current author has since used this method to improve the best-known upper bounds for the enumeration of so-called $3$-stack-sortable and $4$-stack-sortable permutations in \cite{Defant2}. See \cite{Bona, BonaSurvey, Defant2, Zeilberger} for more information about $t$-stack-sortable permutations. 

The method developed in \cite{Defant} and \cite{Defant2} for computing fertilities makes use of new combinatorial objects called \emph{valid hook configurations}. The authors of \cite{Defant3} gave a concise description of valid hook configurations and exhibited a bijection between these objects and certain ordered pairs of set partitions and acyclic orientations. They then exploited this bijection to study permutations with fertility $1$, showing that these permutations\footnote{These permutations are called \emph{uniquely sorted}. They are studied further in \cite{DefantCatalan} and \cite{Hanna}} are counted by an interesting sequence known as Lassalle's sequence (which Lassalle introduced in \cite{Lassalle}). This bijection also allowed the authors to connect cumulants arising in free probability theory with valid hook configurations and the stack-sorting map (building upon results from \cite{Josuat}). For completeness, we repeat the short description of valid hook configurations from \cite{Defant3} in Section 2. See also \cite{DefantCatalan, DefantMotzkin, Defant3, Hanna, Maya} for further investigation of the combinatorics of valid hook configurations.

\begin{definition}\label{Def1}
Say a nonnegative integer $f$ is a \emph{fertility number} if there exists a permutation with fertility $f$. Say a nonnegative integer is an \emph{infertility number} if it is not a fertility number. 
\end{definition}

For example, $0,1$, and $2$ are fertility numbers because $|s^{-1}(21)|=0$, $|s^{-1}(1)|=1$, and $|s^{-1}(12)|=2$. In Section 3, we prove the following statements about fertility numbers. These are Theorems \ref{Thm1}--\ref{Thm7} below.    

\begin{itemize}

\item The set of fertility numbers is closed under multiplication. 

\item If $f$ is a fertility number, then there are arbitrarily long permutations with fertility $f$. 

\item Every nonnegative integer that is not congruent to $3$ modulo $4$ is a fertility number. The lower asymptotic density of the set of fertility numbers is at least $1954/2565\approx 0.7618$. 

\item The smallest fertility number that is congruent to $3$ modulo $4$ is $27$.  

\item If $f$ is a positive fertility number, then there exist a positive integer $n\leq f+1$ and a permutation $\pi\in S_n$ such that $f=|s^{-1}(\pi)|$. 
\end{itemize}

The fourth bullet point above shows, in particular, that the notion of a fertility number is not pointless because infertility numbers exist. The fifth bullet shows that determining whether or not a given number is a fertility number can be reduced to a finite search. This finite search can be very long, but we will see in our proof of the fourth bullet point that we can often cut corners to reduce the computations. In Section 4, we give suggestions for future work, including three conjectures.  

\section{Valid Hook Configurations}

In this section, we review some of the theory of valid hook configurations. Our presentation is virtually the same as that given in \cite{Defant3}, but we include it here for completeness. It is important to note that the valid hook configurations defined below are, strictly speaking, different from those defined in \cite{Defant} and \cite{Defant2}. For a lengthier discussion of this distinction, see \cite{Defant3}. 

The construction of a valid hook configuration commences with the choice of a permutation $\pi=\pi_1\cdots\pi_n$. A \emph{descent} of $\pi$ is an index $i$ such that $\pi_i>\pi_{i+1}$. Let $d_1<\cdots<d_k$ be the descents of $\pi$. We use the example permutation $3142567$ to illustrate the construction. The \emph{plot} of $\pi$ is the graph displaying the points $(i,\pi_i)$ for $1\leq i\leq n$. The left image in Figure \ref{Fig2} shows the plot of our example permutation. A point $(i,\pi_i)$ is a \emph{descent top} if $i$ is a descent. The descent tops in our example are $(1,3)$ and $(3,4)$. 

\begin{figure}[t]
  \centering
  \subfloat[]{\includegraphics[width=0.2\textwidth]{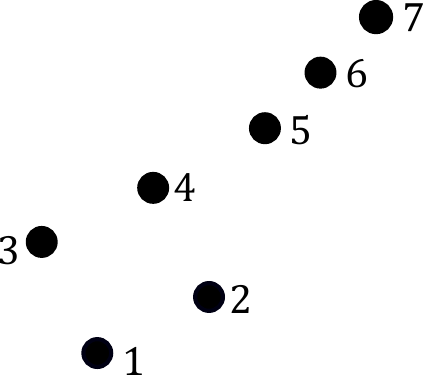}}
  \hspace{1.5cm}
  \subfloat[]{\includegraphics[width=0.2\textwidth]{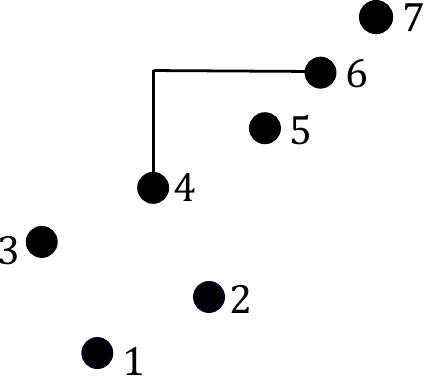}}
  \caption{The left image is the plot of $3142567$. The right images shows this plot along with a single hook.}\label{Fig2}
\end{figure}

A \emph{hook} of $\pi$ is drawn by starting at a point $(i,\pi_i)$ in the plot of $\pi$, moving vertically upward, and then moving to the right until reaching another point $(j,\pi_j)$. We must necessarily have $i<j$ and $\pi_i<\pi_j$. The point $(i,\pi_i)$ is called the \emph{southwest endpoint} of the hook, while $(j,\pi_j)$ is called the \emph{northeast endpoint}. The right image in Figure \ref{Fig2} shows our example permutation with a hook that has southwest endpoint $(3,4)$ and northeast endpoint $(6,6)$. 

A \emph{valid hook configuration} of $\pi$ is a configuration of hooks drawn on the plot of $\pi$ subject to the following constraints: 

\begin{enumerate}[1.]
\item The southwest endpoints of the hooks are precisely the descent tops of the permutation. 

\item A point in the plot cannot lie directly above a hook. 

\item Hooks cannot intersect each other except in the case that the northeast endpoint of one hook is the southwest endpoint of the other. 
\end{enumerate}  

\begin{figure}[t]
\begin{center}
\includegraphics[width=.7\linewidth]{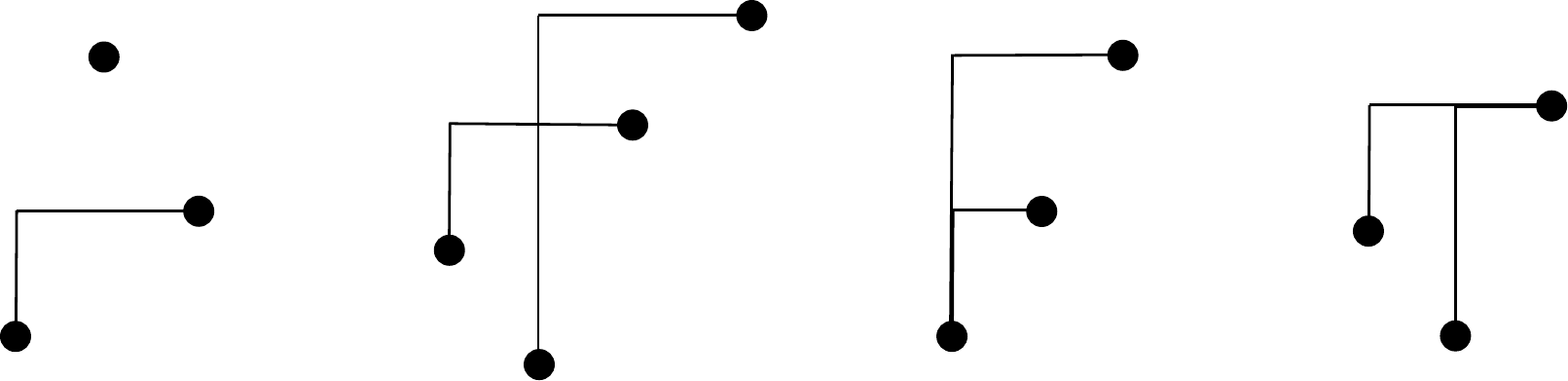}
\caption{Four configurations of hooks that are forbidden in a valid hook configuration.}
\label{Fig3}
\end{center}  
\end{figure}

\begin{figure}[t]
\begin{center}
\includegraphics[width=.7\linewidth]{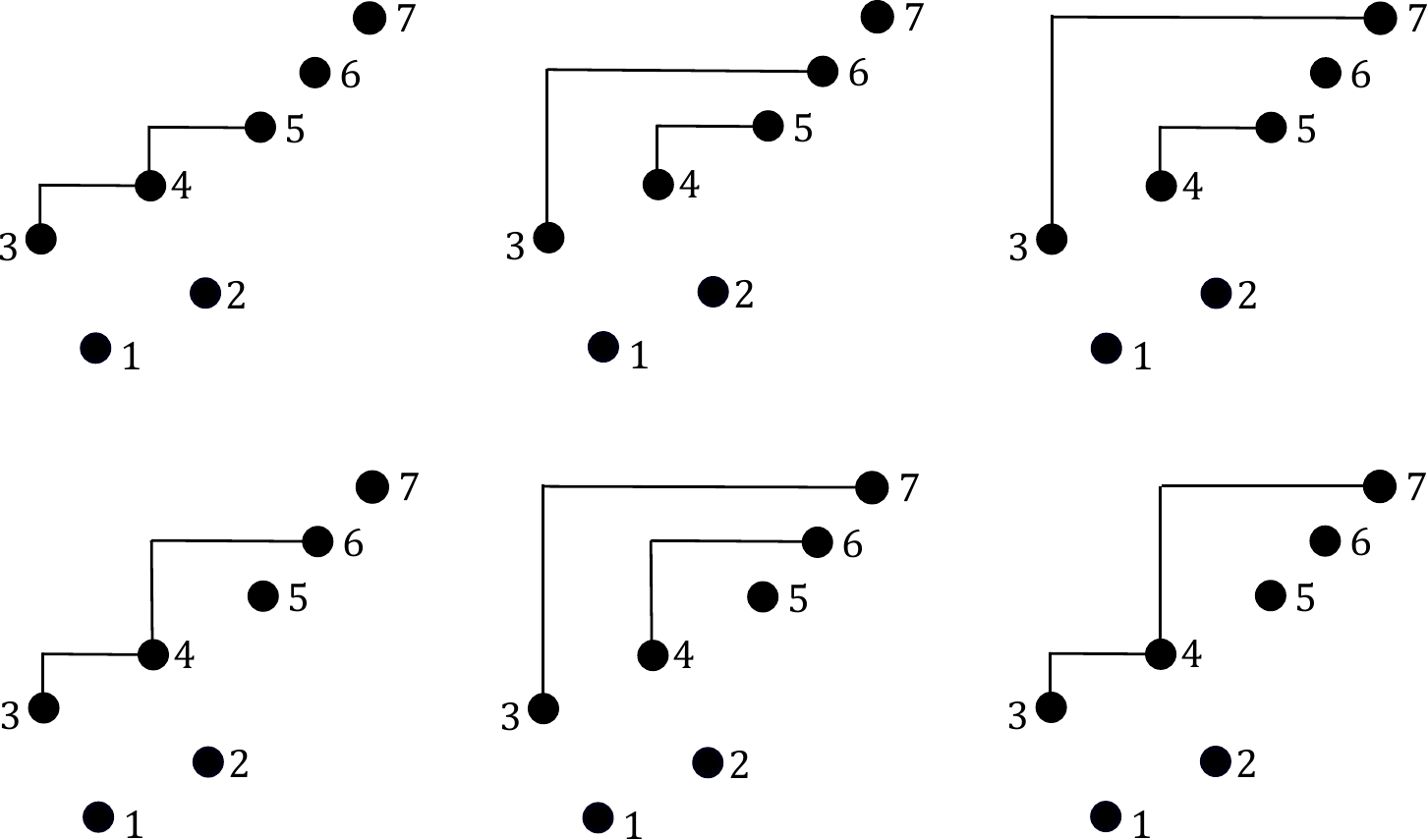}
\caption{All of the valid hook configurations of $3142567$.}
\label{Fig4}
\end{center}  
\end{figure}

Figure \ref{Fig3} shows four placements of hooks that are forbidden by conditions 2 and 3.
Figure \ref{Fig4} shows all of the valid hook configurations of $3142567$. 
Note that the total number of hooks in a valid hook configuration of $\pi$ is exactly $k$, the number of descents of $\pi$. Because the southwest endpoints of the hooks are the points $(d_i,\pi_{d_i})$, we have a natural ordering of the hooks. Namely, the $i^\text{th}$ hook is the hook whose southwest endpoint is $(d_i,\pi_{d_i})$. We can write a valid hook configuration of $\pi$ concisely as a $k$-tuple $\mathcal H=(H_1,\ldots,H_k)$, where $H_i$ is the $i^\text{th}$ hook. 

A valid hook configuration of $\pi$ induces a coloring of the plot of $\pi$. To begin the process of coloring the plot, draw a ``sky" over the entire diagram. As one might expect, we color the sky blue. Assign arbitrary distinct colors other than blue to the $k$ hooks in the valid hook configuration. 

There are $k$ northeast endpoints of hooks, and these points remain uncolored. However, all of the other $n-k$ points will be colored. In order to decide how to color a point $(i,\pi_i)$ that is not a northeast endpoint, imagine that this point looks directly upward. If this point sees a hook when looking upward, it receives the same color as the hook that it sees. If the point does not see a hook, it must see the sky, so it receives the color blue. However, if $(i,\pi_i)$ is the southwest endpoint of a hook, then it must look around (on the left side of) the vertical part of that hook. See Figure \ref{Fig5} for the colorings induced by the valid hook configurations in Figure \ref{Fig4}. Note that the leftmost point $(1,3)$ is blue in each of these colorings because this point looks around the first (red) hook and sees the sky. 

To summarize, we started with a permutation $\pi$ with exactly $k$ descents. We chose a valid hook configuration of $\pi$ by drawing $k$ hooks according to the rules 1, 2, and 3 above. This valid hook configuration then induced a coloring of the plot of $\pi$. Specifically, $n-k$ points were colored, and $k+1$ colors were used (one for each hook and one for the sky). Let $q_i$ be the number of points colored the same color as the $i^\text{th}$ hook, and let $q_0$ be the number of points colored blue (sky color). Then $(q_0,q_1,\ldots,q_k)$ is a composition of $n-k$ into $k+1$ parts.\footnote{Throughout this article, a \emph{composition of $b$ into $a$ parts} is an $a$-tuple of positive integers that sum to $b$. For $i\in\{1,\ldots,k\}$, the number $q_i$ is positive because the point immediately to the right of the southwest endpoint of the $i^\text{th}$ hook is given the same color as the $i^\text{th}$ hook. The number $q_0$ is positive because $(1,\pi_1)$ is colored blue.} We call a composition obtained in this way a \emph{valid composition of }$\pi$. Let $\VHC(\pi)$ be the set of valid hook configurations of $\pi$. Let $\mathcal V(\pi)$ be the set of valid compositions of $\pi$. 

\begin{figure}[t]
\begin{center}
\includegraphics[width=.7\linewidth]{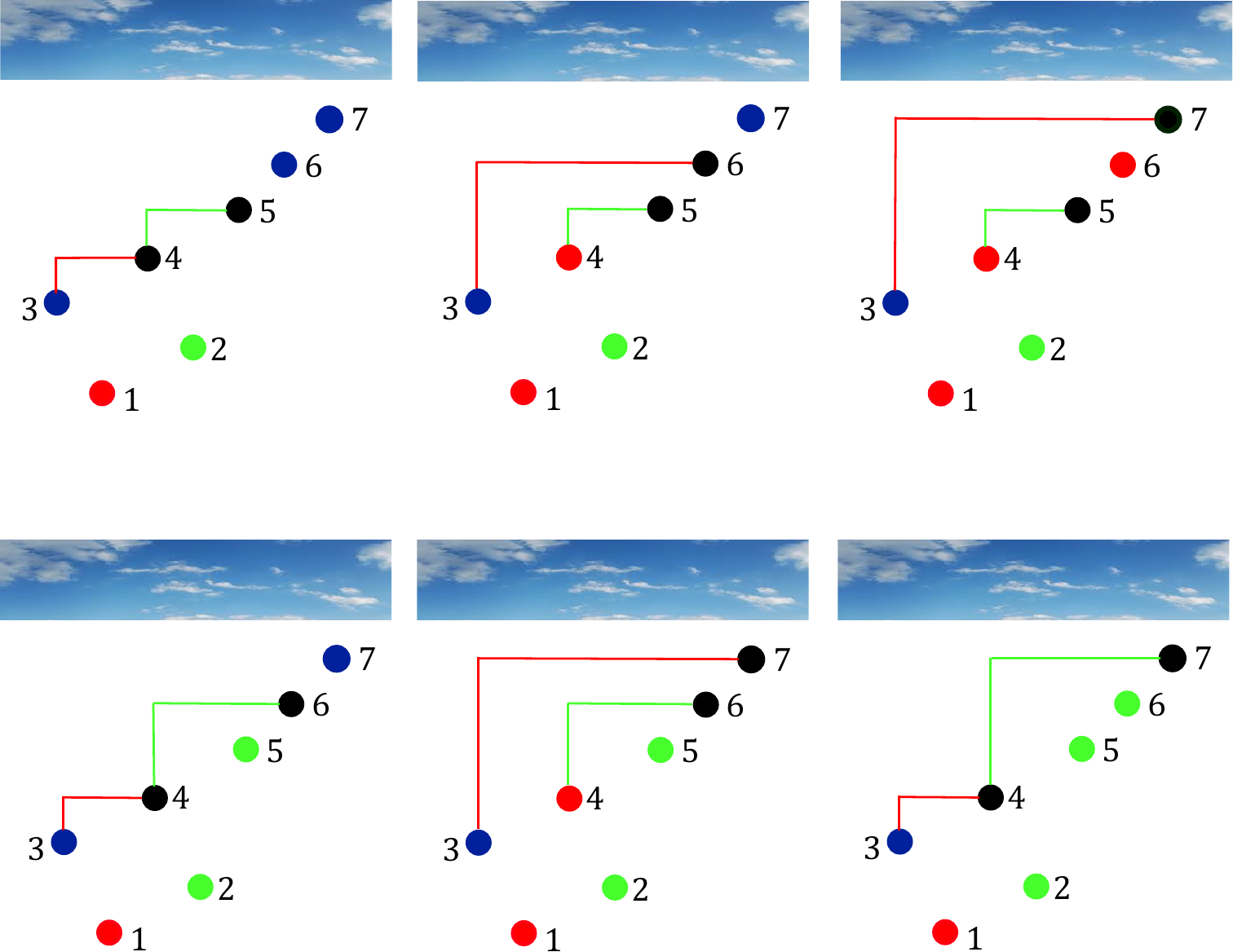}
\caption{The different colorings induced by the valid hook configurations of $3142567$.}
\label{Fig5}
\end{center}  
\end{figure}

The following theorem is the main reason why valid hook configurations are so useful when studying the stack-sorting map. Let $C_j=\frac{1}{j+1}{2j\choose j}$ denote the $j^\text{th}$ Catalan number. We will find it convenient to introduce the notation \[C_{(q_0,\ldots,q_k)}=\prod_{t=0}^kC_{q_t}\] for any composition $(q_0,\ldots,q_k)$. 

\begin{theorem}[\!\!\cite{Defant}]\label{Thm5}
If $\pi$ has exactly $k$ descents, then the fertility of $\pi$ is given by the formula \[|s^{-1}(\pi)|=\sum_{(q_0,\ldots,q_k)\in\mathcal V(\pi)}C_{(q_0,\ldots,q_k)}.\] 
\end{theorem}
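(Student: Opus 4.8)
The plan is to count the preimages of $\pi$ by first choosing a valid hook configuration of $\pi$ and then, inside each monochromatic class of the induced coloring, freely choosing a local structure that is counted by a Catalan number. The first step is to replace permutations by trees. As recalled in the introduction, $\sigma\mapsto T(\sigma)$ is a bijection from permutations of a finite set of positive integers to decreasing binary plane trees on that set, where the largest entry $m$ of $\sigma$ sits at the root and, writing $\sigma=LmR$, the root's left and right subtrees are $T(L)$ and $T(R)$; the in-order reading of $T(\sigma)$ is $\sigma$, and, by induction on $|\sigma|$ using the identity $s(LmR)=s(L)s(R)m$, the postorder reading of $T(\sigma)$ is $s(\sigma)$. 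Hence $|s^{-1}(\pi)|$ equals the number of decreasing binary plane trees whose postorder reading is $\pi$, and it suffices to count those trees.

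Next I would define a map $\Phi$ from this set of trees to $\VHC(\pi)$. A short analysis of postorder readings shows that the descent tops of $\pi$ correspond precisely to the vertices $v$ of $T$ that are left children and for which the right subtree of the parent of $v$ is nonempty with postorder reading beginning at an entry smaller than the label of $v$. For the $i^\text{th}$ such vertex, in left-to-right order, I would draw the $i^\text{th}$ hook from the descent top $(d_i,\pi_{d_i})$ to the plot point of the parent of $v$; since a parent has a larger label and occurs later in postorder than its child, this is a legal hook. I would then check that $\Phi(T)$ satisfies the three defining conditions of a valid hook configuration. Condition 1 is exactly the description of the descent tops just given, and conditions 2 and 3 reduce to the facts that every subtree of $T$ occupies a contiguous block of postorder positions and that two such blocks are nested or disjoint. (For instance, no point of the plot can lie above a hook because the entries strictly between a descent top $d_i$ and the position of the corresponding parent are precisely the entries in that parent's right subtree, each of which has a smaller label than the parent.)

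Finally I would count the fibers of $\Phi$. Fix $\mathcal H\in\VHC(\pi)$, let $(q_0,\ldots,q_k)$ be its valid composition, and let $B_0,B_1,\ldots,B_k$ be the color classes of the coloring it induces, so that $|B_t|=q_t$. Using the structural fact that, read from left to right, the points of each $B_t$ have increasing labels, I would argue that constructing a tree $T$ with $\Phi(T)=\mathcal H$ amounts to choosing, independently for each $t$, a decreasing binary plane tree on the $q_t$ points of $B_t$ whose postorder reading lists those points in increasing order of label; the hooks of $\mathcal H$ and their northeast endpoints then prescribe uniquely how these local trees interlock to form $T$, with the northeast endpoint of $H_i$ taking the local tree on $B_i$ as its right subtree and the subtree of $T$ rooted at the southwest endpoint of $H_i$ as its left subtree, while the blue class $B_0$ supplies the vertices lying ``under the sky,'' including the root $\pi_n$. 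The number of decreasing binary plane trees on an $m$-element linearly ordered set with increasing postorder reading is $C_m$; indeed, this number equals $|s^{-1}(12\cdots m)|$, which by Knuth's theorem is the number of $231$-avoiding permutations of an $m$-element set, namely $C_m$. It follows that $|\Phi^{-1}(\mathcal H)|=\prod_{t=0}^{k}C_{q_t}=C_{(q_0,\ldots,q_k)}$, so summing over $\mathcal H\in\VHC(\pi)$ and grouping by valid composition yields the stated formula.

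The main obstacle is the verification underlying the fiber count: one must identify exactly which local data is unconstrained, show that the interlocking recipe always produces a legitimate decreasing binary plane tree with postorder reading $\pi$, and check that interlocking and restriction are mutually inverse. I expect the cleanest route is induction on $n$. If $\pi_n$ is not the largest entry of $\pi$, then both sides of the identity are $0$, since $\pi$ then has no preimage and no valid hook configuration; otherwise, one peels off $m=\pi_n$ and splits each preimage as $LmR$, matching the position where $s(L)$ ends against the way the descents, hooks, and color classes of $\pi$ restrict to $\pi_1\cdots\pi_{|L|}$ and to $\pi_{|L|+1}\cdots\pi_{n-1}$, where a new descent --- hence a new hook with northeast endpoint $(n,\pi_n)$ --- appears exactly when the split lands at a descent of $\pi$. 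Carrying out this bookkeeping carefully is where the substance of the proof lies.
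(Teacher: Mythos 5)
You should first note that the paper itself contains no proof of Theorem \ref{Thm5}: it is imported from \cite{Defant}, so your attempt has to stand on its own. Its first two stages do: the identification of $|s^{-1}(\pi)|$ with the number of decreasing binary plane trees whose postorder reading is $\pi$ is correct, and your map $\Phi$, sending such a tree to the configuration of hooks joining each descent-top vertex (a left child whose parent's right subtree starts, in postorder, with a smaller entry) to its parent, does land in $\VHC(\pi)$ for essentially the reasons you sketch.

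The genuine gap is the fiber count $|\Phi^{-1}(\mathcal H)|=C_{(q_0,\ldots,q_k)}$, which is the whole content of the theorem and which you explicitly defer. Two concrete problems sit there. First, the ``structural fact'' that each color class is increasing from left to right is true but not free: it needs an argument (e.g., given two same-colored points out of order, take the rightmost position $d$ between them whose entry exceeds the right point's entry; $d$ is a descent, its hook must reach past the right point, and then conditions 2 and 3 force the two points to see different colors). Second, your assembly recipe is false as stated: the local tree on $B_i$ is not in general the right subtree of the northeast endpoint of $H_i$. For $\pi=3142567$ with the valid hook configuration in which $H_1$ runs from $(1,3)$ to $(7,7)$ and $H_2$ from $(3,4)$ to $(6,6)$, the class of $H_1$ is $\{(2,1),(3,4)\}$, and in every tree of that fiber the corresponding local tree (vertex $4$ with $1$ as its child) is the left subtree of the vertex $6$, the northeast endpoint of $H_2$; the right subtree of the northeast endpoint of $H_1$ is all of $\{1,4,2,5,6\}$. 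In general the class of $H_i$ consists of the points below $H_i$ not lying below hooks nested inside $H_i$, and the local trees interlock with ``blobs'' hanging from nested hooks, so identifying the unconstrained data and proving that restriction and assembly are inverse bijections is exactly the nontrivial bookkeeping; the same is true of your fallback induction on $n$, where the recurrence $|s^{-1}(\pi)|=\sum_t|s^{-1}(\pi_1\cdots\pi_t)|\,|s^{-1}(\pi_{t+1}\cdots\pi_{n-1})|$ would have to be matched against the sum over valid compositions. As it stands, the decisive step is asserted rather than proved, and the one explicit description you give of it does not hold.
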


Note in particular that a permutation is sorted if and only if it has a valid hook configuration. See \cite{Defant,Defant2,Defant3} for extensions and refinements of Theorem \ref{Thm5}. 

\begin{example}\label{Exam1}
The permutation $\pi=3142567$ has six valid hook configurations, which are shown in Figure \ref{Fig4}. The colorings induced by these valid hook configurations are portrayed in Figure \ref{Fig5}. The valid compositions of these valid hook configurations are (reading the first row before the second row, each from left to right) \[(3,1,1),\quad (2,2,1),\quad(1,3,1),\quad(2,1,2),\quad(1,2,2),\quad(1,1,3).\]It follows from Theorem \ref{Thm5} that \[|s^{-1}(\pi)|=C_{(3,1,1)}+C_{(2,2,1)}+C_{(1,3,1)}+C_{(2,1,2)}+C_{(1,2,2)}+C_{(1,1,3)}=27.\] Consequently, $27$ is a fertility number.  
\end{example}

Throughout this paper, we implicitly make use of the following result, which is Lemma 3.1 in \cite{Defant2}. 

\begin{theorem}[\!\!\cite{Defant2}]\label{Thm6}
Let $\pi$ be a permutation. The map $\VHC(\pi)\to\mathcal V(\pi)$ sending each valid hook configuration of $\pi$ to its induced valid composition is injective. 
\end{theorem}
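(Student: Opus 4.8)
The plan is to prove the injectivity by strong induction on the number of descents $k$ of $\pi$, at each stage recovering the last hook directly from the composition and then peeling it off. Fix $\pi$ with descents $d_1<\cdots<d_k$. Since the southwest endpoints of the hooks are forced to be the descent tops (condition 1), a valid hook configuration $\mathcal H=(H_1,\dots,H_k)$ is completely determined by the positions $n_1,\dots,n_k$ of the northeast endpoints of its hooks; it therefore suffices to show that the induced composition $(q_0,\dots,q_k)$ determines each $n_i$. When $k=0$ there is a unique (empty) valid hook configuration, so the map is trivially injective and there is nothing to prove. Thus assume $k\ge 1$, and suppose two valid hook configurations of $\pi$ induce the same composition.

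The key step is to show that $q_k$ alone determines $n_k$; concretely, I would prove that the points colored with the color of the $k$th hook $H_k$ are exactly the points $(\ell,\pi_\ell)$ with $d_k<\ell<n_k$, so that $n_k=d_k+q_k+1$. The crucial feature is that $d_k$ is the \emph{last} descent of $\pi$. Consequently no descent top, and hence no southwest endpoint, lies strictly to the right of $d_k$, so no hook can be nested inside $H_k$; moreover any other hook $H_j$ with $d_j<d_k$ whose northeast endpoint fell strictly inside the interval $(d_k,n_k)$ would cross $H_k$, which is forbidden by condition 3. Combining these observations with condition 2, which forces every point $(\ell,\pi_\ell)$ with $d_k<\ell<n_k$ to lie below the horizontal part of $H_k$, one sees that $H_k$ is the innermost hook directly above each such point, so every one of them receives the color of $H_k$, while no point outside the open interval does. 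Since $d_k$ is a descent we have $\pi_{d_k}>\pi_{d_k+1}$, which forces $n_k\ge d_k+2$ and hence $q_k\ge 1$, consistent with $q_k=n_k-d_k-1$. In particular both configurations share the same $q_k$, hence the same $n_k$, hence the same hook $H_k$.

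With $H_k$ recovered, I would peel it off by deleting the $q_k$ points in positions $d_k+1,\dots,n_k-1$ and discarding $H_k$. Because these deleted positions all lie strictly to the right of $d_{k-1}$, the entire left portion of the plot is untouched; and because the deletion turns the descent at $d_k$ into an ascent (as $\pi_{d_k}<\pi_{n_k}$) while creating no new descent at the junction (as $\pi_{n_k}<\pi_{n_k+1}$, no descent occurring past $d_k$), the surviving points form a permutation $\pi'$ whose descents are exactly $d_1,\dots,d_{k-1}$, and $H_1,\dots,H_{k-1}$ become a valid hook configuration of $\pi'$. The induction then closes the argument: two valid hook configurations of $\pi$ with the same composition reduce, via the same $H_k$, to valid hook configurations of the \emph{same} permutation $\pi'$ with identical composition $(q_0,\dots,q_{k-1})$, so they are equal by the inductive hypothesis, and therefore so are the original configurations.

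The step requiring the most care, and the main obstacle, is verifying that this peeling genuinely returns a valid hook configuration of $\pi'$ whose induced composition is precisely $(q_0,\dots,q_{k-1})$. One must check that each surviving hook $H_i$ remains valid, including a hook that enclosed $H_k$, whose horizontal part still passes over the deleted region but now covers fewer points, and, more delicately, that no surviving point changes color. The only point whose coloring rule changes is the southwest endpoint $(d_k,\pi_{d_k})$, which looked \emph{around} $H_k$ before the deletion and looks straight upward afterward; I would confirm that both procedures return the innermost hook enclosing $H_k$ (or the sky, if none exists), so that its color, and therefore every entry of the composition other than $q_k$, is left unchanged. Once this bookkeeping is in place, the induction yields the desired injectivity.
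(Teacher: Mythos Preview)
Your inductive strategy is sound, and since the paper does not prove this result but merely imports it from \cite{Defant2}, there is no in-paper argument to compare against directly. The idea of recovering $H_k$ from $q_k$ via the identity $q_k=n_k-d_k-1$ is exactly right, and your justification that no other hook can have a northeast endpoint in the open interval $(d_k,n_k)$ is correct.

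There is, however, a genuine bookkeeping error in the peeling step. You delete only the interior positions $d_k+1,\dots,n_k-1$ but retain the point $(n_k,\pi_{n_k})$. In $\pi$ this point is uncolored because it is the northeast endpoint of $H_k$; in your $\pi'$ the hook $H_k$ is gone and no surviving hook can share that northeast endpoint, so $(n_k,\pi_{n_k})$ becomes \emph{colored}. A count confirms the discrepancy: $\pi'$ has $n-q_k$ points and $k-1$ hooks, so its valid compositions sum to $(n-q_k)-(k-1)=(n-k)-q_k+1$, which is one more than $q_0+\cdots+q_{k-1}$. Thus the induced composition of $\pi'$ is $(q_0,\dots,q_{k-1})$ with one coordinate incremented---namely the one corresponding to whichever surviving hook (or the sky) sits directly above $(n_k,\pi_{n_k})$. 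That coordinate is determined by $H_1,\dots,H_{k-1}$, not by the common composition $(q_0,\dots,q_k)$ alone, so two configurations with the same composition could a priori peel to configurations of $\pi'$ with \emph{different} compositions, and the inductive hypothesis would not apply.

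The repair is simple: delete the northeast endpoint as well, removing positions $d_k+1,\dots,n_k$. The resulting permutation still has descents exactly at $d_1,\dots,d_{k-1}$ (since $\pi_{d_k}<\pi_{n_k}<\pi_{n_k+1}$, or $d_k$ is the final position if $n_k=n$), every surviving hook keeps both endpoints (you already argued $n_j\notin(d_k,n_k)$ for $j<k$, and $n_j\neq n_k$ since distinct hooks have distinct northeast endpoints), and now every surviving point that was colored keeps its color while no new colored point appears. The induced composition is then genuinely $(q_0,\dots,q_{k-1})$, and your induction closes as written.
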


\section{Proofs of the Main Theorems}
We now exploit the valid hook configurations discussed in the previous section to prove our main theorems concerning fertility numbers. Let us begin with some useful definitions. 

Let $\pi=\pi_1\cdots\pi_n$ be a permutation. Let $H$ be a hook in a valid hook configuration of $\pi$ with southwest endpoint $(i,\pi_i)$ and northeast endpoint $(j,\pi_j)$. When referring to a point ``below" $H$, we mean a point $(x,y)$ with $i<x<j$ and $y<\pi_j$. In particular, the endpoints of a hook do not lie below that hook. 

\begin{definition}\label{Def2}
Let $\pi=\pi_1\cdots\pi_n$ be a permutation, and let $H$ be a hook drawn on the plot of $\pi$. We say $H$ is a \emph{stationary hook} if it appears in every valid hook configuration of $\pi$. 
\end{definition}

For example, suppose $\pi\in S_n$, $\pi_n=n$ and $\pi_i=n-1$, where $i\leq n-2$. Let $H$ be the hook with southwest endpoint $(i,n-1)$ and northeast endpoint $(n,n)$. The point $(i,n-1)$ is a descent top of $\pi$, so every valid hook configuration of $\pi$ must have a hook whose southwest endpoint is $(i,n-1)$. The northeast endpoint of such a hook must be $(n,n)$, so it follows that $H$ is a stationary hook of $\pi$. One can check that the hook drawn in Figure \ref{Fig6} is another example of a stationary hook. 

\begin{figure}[t]
\begin{center}
\includegraphics[width=.4\linewidth]{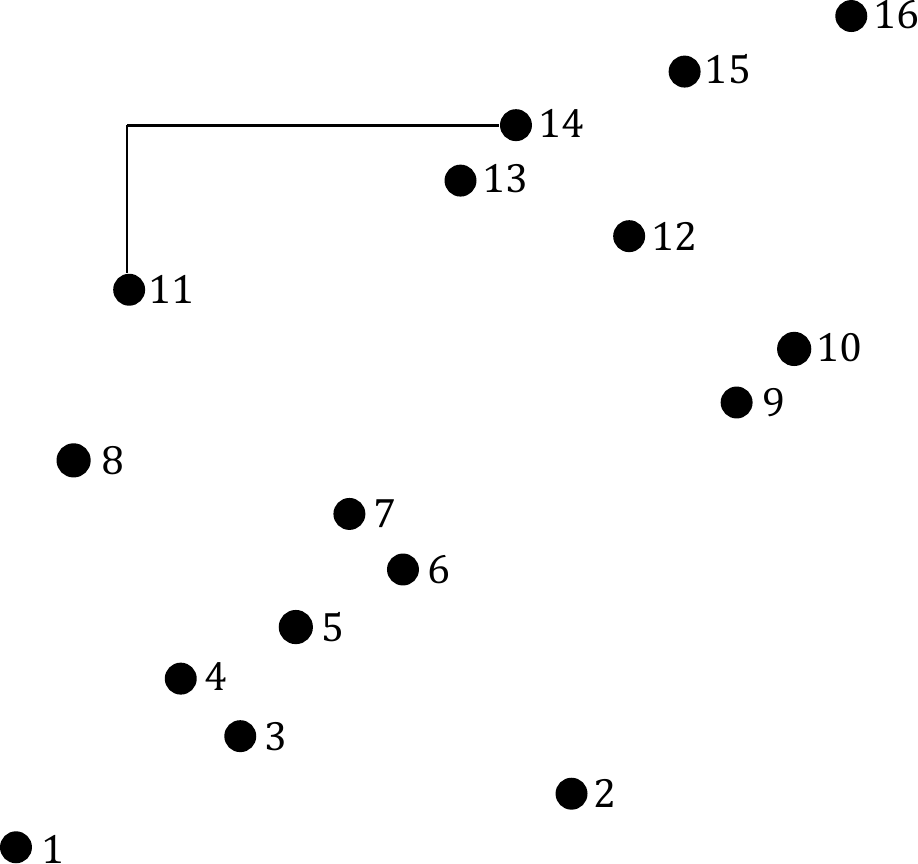}
\caption{A stationary hook of the permutation $1\,\,8\,\,11\,\,4\,\,3\,\,5\,\,7\,\,6\,\,13\,\,14\,\,2\,\,12\,\,15\,\,9\,\,10\,\,16$.}
\label{Fig6}
\end{center}  
\end{figure}

\begin{proposition}\label{Prop1}
Let $\pi=\pi_1\cdots\pi_n$ be a permutation with a stationary hook $H$. Let $(i,\pi_i)$ and $(j,\pi_j)$ be the southwest and northeast endpoints of $H$, respectively. Let $\sigma=\pi_1\cdots\pi_{i+1}\pi_j\cdots\pi_n$ and $\tau=\pi_{i+1}\cdots\pi_{j-1}$. We have
\[|s^{-1}(\pi)|=|s^{-1}(\sigma)||s^{-1}(\tau)|.\]
\end{proposition}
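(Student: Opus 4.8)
The plan is to use Theorem~\ref{Thm5} and establish a bijection between $\mathcal V(\pi)$ and $\mathcal V(\sigma)\times\mathcal V(\tau)$ that multiplies the associated Catalan products correctly. Since $H$ is stationary, every valid hook configuration $\mathcal H$ of $\pi$ contains $H$ as one of its hooks, say $H=H_\ell$. The key structural observation is that the vertical segment of $H$ (rising from $(i,\pi_i)$) together with the no-point-above-a-hook rule forces the plot of $\pi$ to split cleanly: the points strictly below $H$, namely those $(x,\pi_x)$ with $i<x<j$, all have $\pi_x<\pi_j$, and they form a contiguous block in positions $i+1,\dots,j-1$ — these are exactly the points giving rise to $\tau$ (suitably normalized). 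The remaining points, in positions $1,\dots,i+1$ and $j,\dots,n$, are exactly those giving $\sigma$. First I would verify that no hook of $\mathcal H$ other than $H$ can have an endpoint straddling the divide: a hook with southwest endpoint below $H$ cannot have its northeast endpoint outside the block $\{i+1,\dots,j-1\}$ (it would have to cross the vertical part of $H$ or pass under $H$ illegally, violating condition~3 or placing a point above a hook), and conversely a hook with southwest endpoint among the ``outer'' points cannot reach into the block below $H$. The one subtlety is the point $(i+1,\pi_{i+1})$ and the descent top $(i,\pi_i)$: the hook $H$ has its southwest endpoint at $(i,\pi_i)$, which is the last point of $\sigma$'s left part, and this point must ``look around'' $H$, so its color in $\pi$ is the same as its color would be in $\sigma$.

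Given this decomposition, the bijection sends $\mathcal H$ to the pair $(\mathcal H_\sigma,\mathcal H_\tau)$, where $\mathcal H_\tau$ consists of the hooks of $\mathcal H$ lying entirely below $H$ (these are precisely the hooks whose southwest endpoints are descent tops of $\tau$), and $\mathcal H_\sigma$ consists of $H$ itself together with all remaining hooks. I would check that $\mathcal H_\tau$ is a valid hook configuration of $\tau$ and $\mathcal H_\sigma$ is a valid hook configuration of $\sigma$ — conditions~1--3 are inherited locally because the divide is not crossed — and that this map is a bijection, with inverse gluing a valid hook configuration of $\sigma$ (which necessarily contains the stationary hook $H$, since $(i,\pi_i)$ is still a descent top in $\sigma$ with forced northeast endpoint $(j,\pi_j)$) to a valid hook configuration of $\tau$ placed inside the region below $H$.

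The final step is the bookkeeping on colorings. Write the valid composition of $\mathcal H$ as $(q_0,\dots,q_k)$. The points below $H$ are colored either with the color of some hook of $\mathcal H_\tau$ or with the color of $H$ (if they see $H$ directly) — a point below $H$ that sees the sky is impossible, since $H$ blocks the view. Meanwhile, the points colored blue (sky) in $\pi$, and the points colored with a hook of $\mathcal H_\sigma$ other than $H$, are all outer points. Thus if $(r_0,r_1,\dots)$ is the valid composition of $\mathcal H_\sigma$ for $\sigma$, with $r_\ell$ the part for $H$, and $(p_0,p_1,\dots)$ is the valid composition of $\mathcal H_\tau$ for $\tau$, then the parts of $(q_0,\dots,q_k)$ are exactly the parts $\{p_0,p_1,\dots\}$ together with the parts $\{r_0,r_1,\dots\}\setminus\{r_\ell\}$ together with one part equal to $r_\ell+|\tau|-(\text{points of }\tau\text{ not colored }H\text{'s color})$; more precisely the part of $\mathcal H$ corresponding to $H$ equals $r_\ell + p_0$, since the points seeing $H$ in $\pi$ are those seeing $H$ among the outer points (contributing $r_\ell$) plus those points below $H$ that see the sky within the sub-configuration $\mathcal H_\tau$ (contributing $p_0$). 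Hence $C_{(q_0,\dots,q_k)} = \dfrac{C_{r_\ell+p_0}}{C_{r_\ell}\,C_{p_0}}\,C_{(r_0,\dots)}\,C_{(p_0,\dots)}$, which is \emph{not} simply a product — so the naive term-by-term identity fails.

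The main obstacle, then, is precisely reconciling this Catalan discrepancy, and the resolution is that I should not decompose at $H$ alone but rather note that $\sigma=\pi_1\cdots\pi_{i+1}\pi_j\cdots\pi_n$ has been defined to \emph{include} the point $\pi_{i+1}$, which is the first point below $H$; this is the point that, in $\sigma$, receives $H$'s color and accounts for the ``$+1$'' that makes $r_\ell=1$-type interactions collapse. In fact the correct statement is that the region strictly below $H$, together with its left and right ``walls,'' behaves like an independent copy of the fertility problem for $\tau$, and the Catalan number $C_{q}$ for the part $q$ at $H$ factors as a sum over $\mathcal V(\tau)$ of $C_{(p_0,\dots)}$ with the outer contribution handled separately — so the cleanest route is to \emph{reorganize the double sum}: write $|s^{-1}(\pi)|=\sum_{\mathcal H_\sigma}\bigl(\prod_{\text{outer hooks}}C_{q_t}\bigr)\cdot\bigl(\sum_{\mathcal H_\tau}C_{(p_0,\dots)}\bigr)$, recognizing the inner sum as $|s^{-1}(\tau)|$ by Theorem~\ref{Thm5} and the outer factor, after restoring the point $\pi_{i+1}$ to play the role of the blue-or-$H$-colored seed, as $|s^{-1}(\sigma)|$. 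I expect the delicate part to be setting up the normalization of $\sigma$ and $\tau$ and the precise correspondence of colored points so that the two sums genuinely separate; once the bijection on hook configurations is in hand, the summation identity is the crux, and it hinges on the observation that the part $q_\ell$ at the stationary hook $H$ in $\pi$ equals the blue part $p_0$ of the induced valid composition of $\tau$ — so summing over $\mathcal H_\tau$ with weight $C_{p_0}\prod_{t\geq1}C_{p_t}$ reproduces the $H$-part's Catalan factor in $\pi$ exactly, while $\sigma$'s own valid composition has its $H$-part equal to $1$ (the single seed point $\pi_{i+1}$), contributing $C_1=1$ and thus not interfering.
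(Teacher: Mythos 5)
Your proposal is correct and follows essentially the same route as the paper: split each valid hook configuration of $\pi$ at the stationary hook $H$ into one for $\sigma$ and one for $\tau$, observe that the $H$-part of every valid composition of $\sigma$ is $1$ while the $H$-part of $\pi$'s composition equals the blue part of $\tau$'s, and factor the resulting double sum using $C_1=1$. The mid-proof worry that the $H$-part equals $r_\ell+p_0$ is just a double-count of the shared point $\pi_{i+1}$, and your final paragraph resolves it exactly as the paper does.
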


\begin{proof}
There is a natural bijection \[\VHC(\sigma)\times \VHC(\tau)\to\VHC(\pi)\] obtained by combining a valid hook configuration of $\sigma$ and a valid hook configuration of $\tau$ into a valid hook configuration of $\pi$. Furthermore, the colorings of the plots of $\sigma$ and $\tau$ combine into one coloring of $\pi$. Note that the non-blue colors used to color $\sigma$ must be different from those used to color $\tau$. The blue points in the plot of $\tau$ must change to the color of $H$ in the plot of $\pi$. See Figure \ref{Fig10} for a depiction of this combination of valid hook configurations and induced colorings. In that figure, $H$ is the hook with southwest endpoint $(3,11)$ and northeast endpoint $(10,14)$. 

Let $k_\sigma=\text{des}(\sigma)$ and $k_\tau=\text{des}(\tau)$ be the number of descents of $\sigma$ and the number of descents of $\tau$, respectively. Note that $H$ is a stationary hook of $\sigma$. If $i$ is the $r^\text{th}$ descent of $\sigma$, then every valid composition of $\sigma$ is of the form $(q_0,\ldots,q_{r-1},1,q_{r+1},\ldots,q_{k_\sigma})$. It follows from the above paragraph that the map $\mathcal V(\sigma)\times\mathcal V(\tau)\to\mathcal V(\pi)$ given by \[((q_0,\ldots,q_{r-1},1,q_{r+1},\ldots,q_{k_\sigma}),(q_0',\ldots,q_{k_\tau}'))\mapsto(q_0,\ldots,q_{r-1},q_0',\ldots,q_{k_\tau}',q_{r+1},\ldots,q_{k_\sigma})\] is a bijection. Invoking Theorem \ref{Thm5}, we find that 
\[|s^{-1}(\pi)|=\sum_{(q_0,\ldots,q_{r-1},1,q_{r+1},\ldots,q_{k_\sigma})\in\mathcal V(\sigma)}\:\sum_{(q_0',\ldots,q_{k_\tau}')\in\mathcal V(\tau)}C_{(q_0,\ldots,q_{r-1},q_0',\ldots,q_{k_\tau}',q_{r+1},\ldots,q_{k_\sigma})}\]

\[=\sum_{(q_0,\ldots,q_{r-1},1,q_{r+1},\ldots,q_{k_\sigma})\in\mathcal V(\sigma)}\:\sum_{(q_0',\ldots,q_{k_\tau}')\in\mathcal V(\tau)}C_{(q_0,\ldots,q_{r-1},1,q_{r+1},\ldots,q_{k_\sigma})}C_{(q_0',\ldots,q_{k_\tau}')}\] \[=\left[\sum_{(q_0,\ldots,q_{r-1},1,q_{r+1},\ldots,q_{k_\sigma})\in\mathcal V(\sigma)}C_{(q_0,\ldots,q_{r-1},1,q_{r+1},\ldots,q_{k_\sigma})}\right]\left[\sum_{(q_0',\ldots,q_{k_\tau}')\in\mathcal V(\tau)}C_{(q_0',\ldots,q_{k_\tau}')}\right]\] \[=|s^{-1}(\sigma)||s^{-1}(\tau)|.\qedhere\]
\end{proof}

\begin{figure}[t]
\begin{center}
\includegraphics[width=1\linewidth]{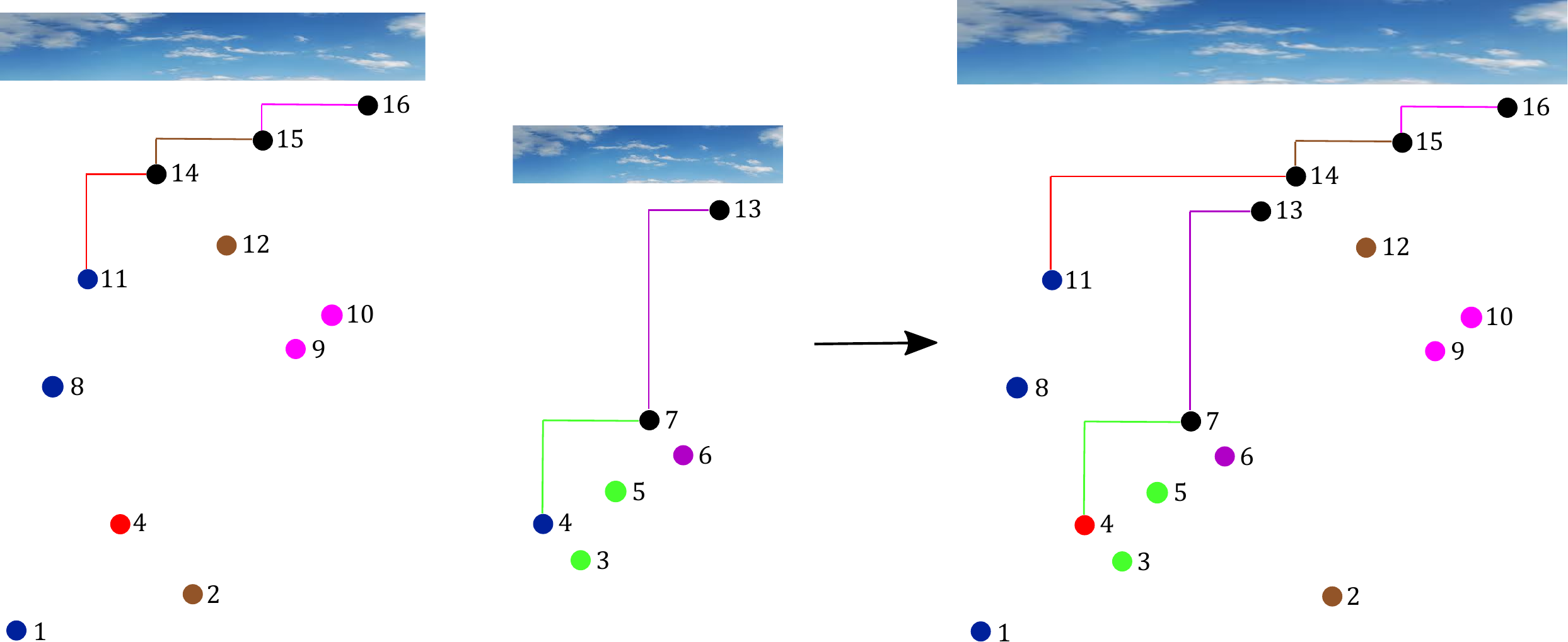}
\caption{Valid hook configurations of $\sigma=1\,\,8\,\,11\,\,4\,\,14\,\,2\,\,12\,\,15\,\,9\,\,10\,\,16$ and $\tau=4\,\,3\,\,5\,\,7\,\,6\,\,13$ combine to form a valid hook configuration of $\pi=1\,\,8\,\,11\,\,4\,\,3\,\,5\,\,7\,\,6\,\,13\,\,14\,\,2\,\,12\,\,15\,\,9\,\,10\,\,16$. In Proposition \ref{Prop1}, we consider a stationary hook $H$ of $\pi$. In this example, $H$ is the (red) hook with southwest endpoint $(3,11)$ and northeast endpoint $(10,14)$.}
\label{Fig10}
\end{center}  
\end{figure}

The following corollary allows us to explicitly construct permutations with certain fertilities by positioning stationary hooks appropriately. Given $\pi=\pi_1\cdots\pi_n\in S_n$, let $\widetilde\pi=(n+1)\pi(n+2)$. If $\pi_n=n$, put $\pi^*=\pi_1\cdots\pi_{n-1}\in S_{n-1}$. If $\lambda=\lambda_1\cdots\lambda_\ell\in S_\ell$ and $\mu=\mu_1\ldots\mu_m\in S_m$, then the \emph{sum} of $\lambda$ and $\mu$, denoted $\lambda\oplus\mu$, is obtained by placing the plot of $\mu$ above and to the right of the plot of $\lambda$. More formally, the $i^\text{th}$ entry of $\lambda\oplus\mu$ is \[(\lambda\oplus\mu)_i=\begin{cases} \lambda_i & \mbox{if } 1\leq i\leq \ell; \\ \mu_{i-\ell}+\ell, & \mbox{if } \ell+1\leq i\leq \ell+m. \end{cases}\] 

\begin{corollary}\label{Cor1}
Let $\ell$ and $m$ be positive integers. Let $\lambda=\lambda_1\cdots\lambda_\ell\in S_\ell$ and $\mu=\mu_1\ldots\mu_m\in S_m$, and assume $\lambda_\ell=\ell$. Letting $\pi=\lambda^*\oplus\widetilde\mu\in S_{\ell+m+1}$, we have \[|s^{-1}(\pi)|=|s^{-1}(\lambda)||s^{-1}(\mu)|.\]
\end{corollary}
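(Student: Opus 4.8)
The plan is to deduce the identity from Proposition~\ref{Prop1} by exhibiting a stationary hook of $\pi=\lambda^*\oplus\widetilde\mu$ and then normalizing the two resulting factor permutations. Writing things out: since $\lambda^*=\lambda_1\cdots\lambda_{\ell-1}$ and $\widetilde\mu=(m+1)\mu_1\cdots\mu_m(m+2)$, adding $\ell-1$ to every entry of $\widetilde\mu$ gives
\[\pi=\lambda_1\cdots\lambda_{\ell-1}\,(\ell+m)\,(\mu_1+\ell-1)\cdots(\mu_m+\ell-1)\,(\ell+m+1)\in S_{\ell+m+1}.\]
The point $(\ell,\ell+m)$ is a descent top of $\pi$ because $\pi_{\ell+1}=\mu_1+\ell-1\le m+\ell-1<\ell+m$, and the only entry of $\pi$ exceeding $\ell+m$ is $\ell+m+1$, which occupies the last position. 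Hence in every valid hook configuration of $\pi$ the hook with southwest endpoint $(\ell,\ell+m)$ is forced to have northeast endpoint $(\ell+m+1,\ell+m+1)$; call it $H$. Then $H$ is a stationary hook, so Proposition~\ref{Prop1} applies with $i=\ell$ and $j=\ell+m+1=n$, yielding $|s^{-1}(\pi)|=|s^{-1}(\sigma)|\,|s^{-1}(\tau)|$ where
\[\sigma=\lambda_1\cdots\lambda_{\ell-1}\,(\ell+m)\,(\mu_1+\ell-1)\,(\ell+m+1),\qquad \tau=(\mu_1+\ell-1)\cdots(\mu_m+\ell-1).\]

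Next I would identify $\sigma$ and $\tau$ up to order-isomorphism, using that the fertility of a permutation depends only on its pattern (immediate from $s(XnY)=s(X)s(Y)n$). The word $\tau$ is order-isomorphic to $\mu$, so $|s^{-1}(\tau)|=|s^{-1}(\mu)|$. The word $\sigma$ is order-isomorphic to $\rho:=\lambda_1\cdots\lambda_{\ell-1}(\ell+1)\,\ell\,(\ell+2)\in S_{\ell+2}$, because the three largest entries $\mu_1+\ell-1<\ell+m<\ell+m+1$ of $\sigma$ play the roles of $\ell<\ell+1<\ell+2$. Thus everything reduces to proving $|s^{-1}(\rho)|=|s^{-1}(\lambda)|$.

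For this I would argue directly with the recursion. If $s(w)=\rho$, then, since the last letter of $s(w)$ equals $\max(w)$, we have $\max(w)=\ell+2$ and $w=A(\ell+2)B$ with $s(A)s(B)=\lambda_1\cdots\lambda_{\ell-1}(\ell+1)\,\ell$. This word ends in $\ell$ while its second-largest letter $\ell+1$ occupies the penultimate position, so $\ell+1$ cannot lie in $B$ (otherwise $s(B)$ would end in $\ell+1$); hence $\ell+1\in A$, which forces $s(A)=\lambda_1\cdots\lambda_{\ell-1}(\ell+1)$ and $s(B)=\ell$. Therefore $B$ is the single letter $\ell$ and $A$ ranges over $s^{-1}(\lambda_1\cdots\lambda_{\ell-1}(\ell+1))$, and conversely each such $A$ produces a preimage $A(\ell+2)\ell$ of $\rho$. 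Consequently $|s^{-1}(\rho)|=|s^{-1}(\lambda_1\cdots\lambda_{\ell-1}(\ell+1))|=|s^{-1}(\lambda)|$, the last equality because $\lambda_1\cdots\lambda_{\ell-1}(\ell+1)$ and $\lambda=\lambda_1\cdots\lambda_{\ell-1}\ell$ differ only in the label of their shared last (and maximal) entry. Chaining the displayed equalities gives $|s^{-1}(\pi)|=|s^{-1}(\lambda)|\,|s^{-1}(\mu)|$. (Alternatively, $|s^{-1}(\rho)|=|s^{-1}(\lambda)|$ could be obtained by matching valid hook configurations through Theorem~\ref{Thm5}, but the recursive route is shorter.)

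I expect this last identity to be the only real obstacle: Proposition~\ref{Prop1} by itself does not dispose of the two extra entries that appear because the stationary hook $H$ reaches position $n$ rather than position $n-2$, so one genuinely needs the (elementary but not automatic) analysis above pinning down where the two largest entries of a preimage of $\rho$ must go.
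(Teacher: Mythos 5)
Your proof is correct and follows the paper's skeleton almost exactly: you identify the forced hook from $(\ell,\ell+m)$ to $(\ell+m+1,\ell+m+1)$ as stationary, apply Proposition \ref{Prop1} with $i=\ell$, $j=n$, and reduce $\tau$ and $\sigma$ by order isomorphism to $\mu$ and to $\lambda'=\lambda_1\cdots\lambda_{\ell-1}(\ell+1)\ell(\ell+2)$, which is precisely what the paper does. The only divergence is the final identity $|s^{-1}(\lambda')|=|s^{-1}(\lambda)|$: the paper stays inside the valid hook configuration framework, observing that $\mathcal V(\lambda')=\{(q_0,\ldots,q_r,1):(q_0,\ldots,q_r)\in\mathcal V(\lambda)\}$ and invoking Theorem \ref{Thm5} together with $C_1=1$, whereas you argue directly from the recursion $s(LmR)=s(L)s(R)m$, pinning down that any preimage of $\lambda'$ must have the form $A(\ell+2)\ell$ with $s(A)=\lambda_1\cdots\lambda_{\ell-1}(\ell+1)$; your case analysis there (the largest letter $\ell+2$ splits the preimage, $\ell+1$ cannot lie in the right block since $s$ of that block would end in it, hence $s(B)=\ell$ and $B=\ell$) is sound, including the converse direction. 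Your route for that step is slightly more elementary, bypassing the VHC machinery entirely; the paper's route is a one-line bookkeeping statement once Theorem \ref{Thm5} is available and keeps the whole proof in the language used for Proposition \ref{Prop1}. Either way the conclusion is established, so there is no gap.
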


\begin{proof}
Note that $\pi_\ell=\ell+m$ and $\pi_{\ell+m+1}=\ell+m+1$. The hook with southwest endpoint $(\ell,\ell+m)$ and northeast endpoint $(\ell+m+1,\ell+m+1)$ is a stationary hook of $\pi$. Following Proposition \ref{Prop1}, let $\sigma=\pi_1\cdots\pi_{\ell+1}\pi_{\ell+m+1}$ and $\tau=\pi_{\ell+1}\cdots\pi_{\ell+m}$. That proposition tells us that $|s^{-1}(\pi)|=|s^{-1}(\sigma)||s^{-1}(\tau)|$. We have $\tau_i=\mu_i+(\ell-1)$ for all $i\in\{1,\ldots,m\}$, so $\tau$ and $\mu$ are order isomorphic. It is immediate from the definition of the stack-sorting map that two permutations that are order isomorphic have the same fertility. Thus, $|s^{-1}(\tau)|=|s^{-1}(\mu)|$. Also, $\sigma$ is order isomorphic to the permutation $\lambda'=\lambda_1\cdots\lambda_{\ell-1}(\ell+1)\ell(\ell+2)$. We have \[\mathcal V(\lambda')=\{(q_0,\ldots,q_r,1):(q_0,\ldots,q_r)\in\mathcal V(\lambda)\}.\] According to Theorem \ref{Thm5}, \[|s^{-1}(\sigma)|=|s^{-1}(\lambda')|=\sum_{(q_0,\ldots,q_r,1)\in\mathcal V(\lambda')}C_{(q_0,\ldots,q_r,1)}=\sum_{(q_0,\ldots,q_r)\in\mathcal V(\lambda)}C_{(q_0,\ldots,q_r)}=|s^{-1}(\lambda)|. \qedhere\] 
\end{proof}

The following theorem is now an immediate consequence of Corollary \ref{Cor1}. 

\begin{theorem}\label{Thm1}
The set of fertility numbers is closed under multiplication.
\end{theorem}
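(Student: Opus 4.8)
The plan is to derive Theorem \ref{Thm1} directly from Corollary \ref{Cor1}, which already does essentially all of the work. Suppose $f_1$ and $f_2$ are fertility numbers; we want to show $f_1 f_2$ is a fertility number. Since $0$ is a fertility number (witnessed by $|s^{-1}(21)|=0$) and $0\cdot f = 0$, we may assume $f_1$ and $f_2$ are positive. By definition there exist permutations realizing these fertilities, and after replacing them by order-isomorphic normalized permutations (which have the same fertility, as noted in the proof of Corollary \ref{Cor1}), we obtain $\lambda\in S_\ell$ and $\mu\in S_m$ with $|s^{-1}(\lambda)|=f_1$ and $|s^{-1}(\mu)|=f_2$.

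The one genuine adjustment needed is the hypothesis $\lambda_\ell=\ell$ in Corollary \ref{Cor1}, i.e. that the chosen witness for $f_1$ ends with its largest entry. The remedy is to pass from $\lambda$ to $\lambda\oplus 1$, the permutation obtained by appending a new largest entry at the end. This does not change the fertility: in the recursive description $s(LmR) = s(L)s(R)m$, appending a largest entry $m$ at the very end corresponds to taking $R$ empty, and one checks (either via that recursion or via Theorem \ref{Thm5}, since appending a new maximum at the end creates no new descent and each valid hook configuration extends uniquely with $q$-value unchanged except for appending nothing) that $|s^{-1}(\lambda\oplus 1)| = |s^{-1}(\lambda)| = f_1$. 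Now $\lambda\oplus 1$ has its largest entry in the last position, so Corollary \ref{Cor1} applies with this permutation in place of $\lambda$ and with $\mu$, yielding a permutation $\pi$ with $|s^{-1}(\pi)| = |s^{-1}(\lambda\oplus 1)|\,|s^{-1}(\mu)| = f_1 f_2$. Hence $f_1 f_2$ is a fertility number.

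I do not expect any real obstacle here: the substantive combinatorics — the bijection on valid hook configurations and the multiplicativity of the Catalan-weighted sum across a stationary hook — is entirely contained in Proposition \ref{Prop1} and Corollary \ref{Cor1}. The only thing to be careful about is the bookkeeping around normalization and the ``append a maximum'' trick so that the hypothesis $\lambda_\ell = \ell$ of Corollary \ref{Cor1} is legitimately met, and the trivial case $f_1 f_2 = 0$. So the proof will be short: reduce to positive fertility numbers, pick normalized witnesses, append a largest entry to one of them to satisfy the hypothesis, and invoke Corollary \ref{Cor1}.
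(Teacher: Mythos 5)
Your overall strategy --- dispose of the case $f_1f_2=0$ and then invoke Corollary \ref{Cor1} --- is exactly the paper's (the paper simply declares the theorem an immediate consequence of that corollary). However, the device you introduce to meet the hypothesis $\lambda_\ell=\ell$ contains a genuine error: it is \emph{not} true that $|s^{-1}(\lambda\oplus 1)|=|s^{-1}(\lambda)|$. Already for $\lambda=1$ we have $|s^{-1}(1)|=1$ while $|s^{-1}(12)|=2$ (both $12$ and $21$ sort to $12$). Your recursion argument fails because a preimage $\sigma=L(\ell+1)R$ of $\lambda(\ell+1)$ only needs to satisfy $s(L)s(R)=\lambda$, and $R$ need not be empty. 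Your valid-hook-configuration argument fails as well: the appended point $(\ell+1,\ell+1)$ is the highest point in the plot, is never a northeast endpoint forced on us, sees only the sky, and is therefore colored blue, so each valid composition $(q_0,\ldots,q_k)$ of $\lambda$ gives rise to the composition $(q_0+1,q_1,\ldots,q_k)$ of $\lambda\oplus 1$ (and possibly additional valid hook configurations appear in which a hook terminates at the new point), changing the Catalan weights. Since your final identity $|s^{-1}(\pi)|=|s^{-1}(\lambda\oplus 1)|\,|s^{-1}(\mu)|$ relies on this false equality, the proof as written does not establish $|s^{-1}(\pi)|=f_1f_2$.

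The repair is immediate and requires no appending at all: once you have reduced to positive $f_1$, your normalized witness $\lambda\in S_\ell$ is sorted, and every nonempty sorted permutation automatically ends in its largest entry, because the recursion $s(LmR)=s(L)s(R)m$ always places the maximum last in the output. Hence $\lambda_\ell=\ell$ holds for free, and Corollary \ref{Cor1} applies directly to $\lambda$ and $\mu$. With that substitution your argument is correct and coincides with the paper's intended deduction.
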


The next theorem also follows easily from the above corollary. 

\begin{theorem}\label{Thm2}
If $f$ is a fertility number, then there are arbitrarily long permutations with fertility $f$. 
\end{theorem}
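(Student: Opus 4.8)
The plan is to prove this by iterating Corollary~\ref{Cor1}, using at each step the permutation $1\in S_1$, which has fertility $1$ and hence multiplies fertilities by $1$ while lengthening permutations. Put differently, I expect the theorem to fall out of the single observation that the operation $\nu\mapsto\widetilde\nu$ preserves fertility while increasing length by $2$.

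In detail, first I would unwind the definition of a fertility number: there is a permutation of fertility $f$, and since order-isomorphic permutations have the same fertility, I may take it to be normalized, say $\mu\in S_m$ with $|s^{-1}(\mu)|=f$. Next I would set $\pi^{(0)}=\mu$ and $\pi^{(j+1)}=\widetilde{\pi^{(j)}}$, so that $\pi^{(j)}\in S_{m+2j}$. Then I would apply Corollary~\ref{Cor1} with $\lambda=1\in S_1$ (so $\ell=1$, $\lambda_\ell=\ell$, and $\lambda^*\oplus\widetilde{\pi^{(j)}}=\widetilde{\pi^{(j)}}$) and with the role of $\mu$ there played by $\pi^{(j)}$; since $|s^{-1}(1)|=1$, this gives
\[|s^{-1}(\pi^{(j+1)})|=|s^{-1}(1)|\cdot|s^{-1}(\pi^{(j)})|=|s^{-1}(\pi^{(j)})|.\]
A trivial induction on $j$ then yields $|s^{-1}(\pi^{(j)})|=f$ for all $j\ge 0$, and since $\pi^{(j)}$ has length $m+2j$, the permutations $\pi^{(0)},\pi^{(1)},\pi^{(2)},\ldots$ witness the claim.

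There is essentially no obstacle here; the only point I would want to double-check is the degenerate instance $\ell=1$ of Corollary~\ref{Cor1}, in which $\lambda^*$ is the empty permutation of $S_0$. Tracing through the proof of that corollary in this case, the stationary hook of $\widetilde{\pi^{(j)}}$ used there has southwest endpoint $(1,\,m+2j+1)$ and northeast endpoint $(m+2j+2,\,m+2j+2)$, and the permutation called $\lambda'$ in that proof is exactly $213$, so the argument goes through unchanged. If one prefers to sidestep the empty permutation altogether, one can instead run the same iteration with $\lambda=213\in S_3$ (which has fertility $1$, as one checks directly, and satisfies $\lambda_3=3$); the length then grows by $4$ at each step rather than $2$, which serves the purpose equally well.
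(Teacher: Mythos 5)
Your proof is correct and takes essentially the same route as the paper: both arguments iterate Corollary~\ref{Cor1} with the trivial permutation $1\in S_1$ placed in one of the two slots, so each step preserves the fertility $f$ while increasing the length. The only (immaterial) difference is that you put $1$ in the $\lambda$ slot, making the step $\mu\mapsto\widetilde\mu$ (length $+2$), whereas the paper puts $1$ in the $\mu$ slot and uses $\lambda\mapsto\lambda^*\oplus 213$; your choice even avoids needing the hypothesis $\lambda_\ell=\ell$ on the fertility-$f$ permutation, and your check of the degenerate instance $\ell=1$ of Corollary~\ref{Cor1} is accurate.
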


\begin{proof}
If $f$ is a fertility number, then there is a permutation $\lambda$ such that $|s^{-1}(\lambda)|=f$. We may assume that $\lambda$ is normalized. That is, $\lambda\in S_\ell$ for some $\ell\geq 1$. Now let $\mu=1\in S_1$. The permutation $\pi$ constructed in Corollary \ref{Cor1} has length $\ell+2$ and has fertility $f$. Repeating this procedure yields arbitrarily long permutations with fertility $f$.
\end{proof} 

Given a set $S$ of nonnegative integers, the quantity \[\liminf_{N\to\infty}\frac{|S\cap\{0,1,\ldots,N-1\}|}{N}\] is called the \emph{lower asymptotic density} of $S$. We next construct explicit permutations with certain fertilities in order to prove the following theorem. 

\begin{theorem}\label{Thm3}
Every nonnegative integer that is not congruent to $3$ modulo $4$ is a fertility number. The lower asymptotic density of the set of fertility numbers is at least $1954/2565\approx 0.7618$. 
\end{theorem}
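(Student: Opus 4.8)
The strategy is to use Theorem~\ref{Thm5} (the fertility formula) together with Corollary~\ref{Cor1} to build, for each target value $f$, an explicit permutation whose fertility equals $f$. The key algebraic observation is that $C_1 = C_2 = 1$, so attaching trivial factors via Corollary~\ref{Cor1} costs nothing, and that any composition $(q_0,\ldots,q_k)$ with all parts in $\{1,2\}$ contributes exactly $1$ to the fertility sum. Thus I would first look for a family of permutations $\pi$ whose set of valid compositions $\mathcal V(\pi)$ consists entirely of $\{1,2\}$-valued tuples, so that $|s^{-1}(\pi)| = |\mathcal V(\pi)|$ counts valid hook configurations directly (this is legitimate by Theorem~\ref{Thm6}, which says the correspondence $\VHC(\pi)\to\mathcal V(\pi)$ is injective). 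The most natural candidates are layered permutations of the form $\delta\oplus\delta\oplus\cdots$ where $\delta=21$ (each descent contributes exactly one hook with almost no freedom), possibly interleaved with fixed points; for such permutations the number of valid hook configurations should be computable by a clean recursion, and one hopes the attainable values already cover a dense set of residues mod~$4$.

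Then I would carry out the following steps. First, isolate a small ``alphabet'' of base permutations whose fertilities I can compute by hand via Theorem~\ref{Thm5}: e.g.\ $1$ has fertility $1$, $12$ has fertility $2$, $3142567$ has fertility $27$ (Example~\ref{Exam1}), and a few more obtained by small modifications. Second, use Corollary~\ref{Cor1} (equivalently Theorem~\ref{Thm1}) to take products of these base values, so the set of known fertility numbers becomes multiplicatively closed. Third, and this is the genuinely constructive part, find permutations realizing each value in $\{0,1,2,\ldots\}\setminus(3+4\mathbb Z)$: I expect the hard cases are the numbers $\equiv 1\pmod 4$ and especially the even numbers not covered by products, so I would design a one-parameter family $\pi^{(t)}$ whose valid hook configurations I can enumerate exactly, getting fertilities that run through a full arithmetic progression; combined with closure under multiplication and the ``add a trivial block'' move, a finite check should then knock out every residue class except $3\pmod 4$. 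Fourth, for the density statement, I would take the known fertility numbers modulo some modulus $M$, compute which residues mod~$M$ are hit (this is where the constant $2565 = M$, presumably a product of small primes, enters), and conclude the lower density is $(\#\text{hit residues})/M \ge 1954/2565$ by periodicity of the construction; the numerator $1954$ is just the count of residues mod $2565$ that one can certify.

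The main obstacle is Step three: producing permutations whose valid hook configurations are \emph{exactly} enumerable and whose counts sweep out a full residue class. Condition~2 in the definition of a valid hook configuration (no point directly above a hook) makes the combinatorics subtle, so the delicate work is choosing the plot so that the hooks are forced enough to count but flexible enough to give a tunable parameter. A secondary difficulty is bookkeeping for the density bound: one must be careful that the residues certified as fertility numbers mod~$2565$ genuinely come from permutations (not merely from the multiplicative closure of a set that might itself be incompletely understood), and that $2565$ is chosen so that $4 \mid 2565$ fails --- in fact $2565 = 3^3\cdot 5\cdot 19$ is odd, so the ``$\not\equiv 3 \pmod 4$'' obstruction does not directly interact with the modulus, and the $1954$ residues must be exhibited by an explicit (computer-assisted) search over the base alphabet and its products. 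I would present the base computations and the parametrized family in full, and relegate the residue enumeration mod~$2565$ to a verification that the reader can reproduce.
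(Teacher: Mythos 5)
Your plan has the same general shape as the paper's argument (explicit families evaluated via Theorem \ref{Thm5}, multiplicativity from Corollary \ref{Cor1}, then a periodic density count), but it has a genuine gap at its constructive core. First, the algebraic premise is false: $C_2=2$, not $1$, so a valid composition with parts in $\{1,2\}$ contributes $2^{(\text{number of }2\text{'s})}$ to the fertility, not $1$; Theorem \ref{Thm6} only says $\VHC(\pi)\to\mathcal V(\pi)$ is injective, so $|\mathcal V(\pi)|$ counts configurations, while the fertility weights each composition by $C_q$. Hence your device ``fertility $=$ number of valid hook configurations'' fails exactly on the permutations you propose to use. Second, and more seriously, the heart of the theorem --- exhibiting a permutation of fertility $f$ for every even $f\geq 2$ and every $f\equiv 1\pmod 4$ --- is left as a hope (``I would design a one-parameter family,'' ``one hopes the attainable values cover\ldots''); no family is actually produced or its compositions enumerated. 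The paper does precisely this: $\xi_m=m(m-1)\cdots 21(m+1)\cdots(2m)$ has exactly $m$ valid compositions, each with one part equal to $2$ and the rest $1$'s, giving fertility $2m$ (note this uses $C_2=2$), and $1\oplus\xi_m$ has fertility $5+4(m-1)=4m+1$. Your suggested blocks $21\oplus 21\oplus\cdots$ do not work as stated (already $2143$ has no valid hook configuration, hence fertility $0$), so choosing and verifying the family is exactly where the work lies.

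The density claim has a second gap. The bound $1954/2565$ comes from certifying two specific fertility numbers congruent to $3$ modulo $4$: the number $27$ (Example \ref{Exam1}) and $95=|s^{-1}(1243567)|=C_{(5,1)}+C_{(4,2)}+C_{(3,3)}=42+28+25$, which your proposal never identifies. Multiplicativity then gives that every $f\equiv 3\pmod 4$ divisible by $27$ or by $95$ is a fertility number, since such $f$ factors as $27$ (or $95$) times an integer $\equiv 1\pmod 4$. The certified set is a union of residue classes modulo $4\cdot 27\cdot 95=10260$ (not modulo the odd number $2565$), with density $\frac 34+\frac 1{108}+\frac 1{380}-\frac 1{10260}=\frac{1954}{2565}$; a residue search modulo $2565$ alone cannot yield this constant, and in any case the search has nothing to certify until a second $3\pmod 4$ fertility number such as $95$ is exhibited.
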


\begin{proof}
We begin by showing that the permutation \[\xi_m=m(m-1)\cdots 321(m+1)(m+2)\cdots (2m)\] has fertility $2m$. The descent tops of this permutation are precisely the points of the form $(i,m+1-i)$ for $i\in\{1,\ldots,m-1\}$. In a valid hook configuration of $\xi_m$, the southwest endpoints of the hooks are precisely these descent tops. The northeast endpoints of hooks form an $(m-1)$-element subset of \linebreak $\{(m+1,m+1),\ldots,(2m,2m)\}$. Of course, this subset is determined by choosing the number $j\in\{1,\ldots,m\}$ such that $(m+j,m+j)$ is not in the subset. Once this number is chosen, the hooks themselves are determined by the fact that hooks cannot intersect in a valid hook configuration. The valid composition induced from this valid hook configuration is $(1,\ldots,1,2,1,\ldots,1)$, where the $2$ is in the $(m+1-j)^\text{th}$ position. Since $C_{(1,1,\ldots,1,2,1,\ldots,1)}=2$, it follows from Theorem \ref{Thm5} that $|s^{-1}(\xi_m)|=2m$. Thus, every even positive integer is a fertility number. This computation is illustrated in Figure \ref{Fig7} in the case $m=4$. 

\begin{figure}[t]
\begin{center}
\includegraphics[width=1\linewidth]{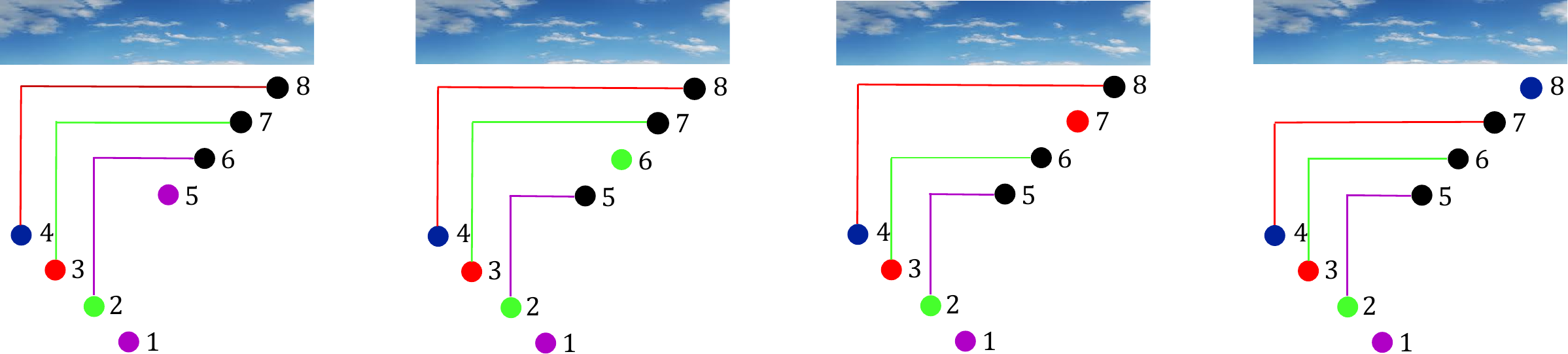}
\caption{The valid hook configurations of $\xi_4=43215678$ along with their induced colorings.}
\label{Fig7}
\end{center}  
\end{figure}

Suppose we have a permutation $\pi\in S_n$. Every valid hook configuration of $1\oplus\pi$ is obtained by placing a valid hook configuration of $\pi$ above and to the right of the point $(1,1)$. In the induced coloring of the plot of $1\oplus\pi$, the point $(1,1)$ must be blue. Every other point is given the same color as in the coloring of the plot of $\pi$ induced from the original valid hook configuration. It follows that \[\mathcal V(1\oplus\pi)=\{(q_0+1,q_1,\ldots,q_r):(q_0,\ldots,q_r)\in\mathcal V(\pi)\}.\] 

We have seen that the valid compositions of $\xi_m$ are precisely the compositions consisting of $m-1$ parts that are equal to $1$ and one part that is equal to $2$. Therefore, the valid compositions of $1\oplus\xi_m$ are \[(3,1,1,1,\ldots,1),\hspace{.2cm}(2,2,1,1,\ldots,1),\hspace{.2cm}(2,1,2,1,\ldots,1),\hspace{.2cm}\ldots,\hspace{.2cm}(2,1,1,\ldots,1,2).\] Invoking Theorem \ref{Thm5}, we find that \[|s^{-1}(1\oplus\xi_m)|=5+4(m-1)=4m+1.\] It follows that every positive integer that is congruent to $1$ modulo $4$ is a fertility number.

We saw in Example \ref{Exam1} that $27$ is a fertility number. The valid compositions of $1243567$ are $(5,1)$, $(4,2)$, and $(3,3)$, so \[|s^{-1}(1243567)|=C_{(5,1)}+C_{(4,2)}+C_{(3,3)}=42+28+25=95.\] This shows that $95$ is also a fertility number. If we combine Theorem \ref{Thm1} with the fact that all positive integers congruent to $1$ modulo $4$ are fertility numbers, then we find that all positive integers congruent to $3$ modulo $4$ that are multiples of $27$ or $95$ are also fertility numbers. In summary, every nonnegative integer $f$ satisfying one of the following conditions is a fertility number: 
\begin{itemize}
\item $f\not\equiv 3\pmod 4$;
\item $f\equiv 3\pmod 4$ and $27\mid f$;
\item $f\equiv 3\pmod 4$ and $95\mid f$.
\end{itemize}   
The natural density of the set of nonnegative integers satisfying one of these conditions is \[\frac 34+\frac{1}{4\cdot 27}+\frac{1}{4\cdot 95}-\frac{1}{4\cdot 27\cdot 95}=\frac{1954}{2565}. \qedhere\]
\end{proof}

The constant $1954/2565$ in Theorem \ref{Thm3} is not optimal. Indeed, we can increase the constant by simply exhibiting a fertility number that is congruent to $3$ modulo $4$ and is not already counted. Let us briefly describe one method for doing this. Let \[\zeta_m=(m+1)1(m+2)2(m+3)3\cdots(2m)m(2m+1)(2m+2)(2m+3).\] The valid compositions of $\zeta_m$ are precisely the compositions consisting of either one $3$ and $m$ $1$'s or two $2$'s and $m-1$ $1$'s. This is not difficult to see, but one can also give a rigorous proof using Theorem 2.4 from \cite{Defant5}. For example, $\zeta_2$ is the permutation $3142567$ from Example \ref{Exam1}. It follows from Theorem \ref{Thm5} that \[|s^{-1}(\zeta_m)|=5(m+1)+4{m+1\choose 2},\] and this is congruent to $3$ modulo $4$ whenever $m\equiv 2\pmod 4$. 
 
Proving that a given positive integer $f$ is a fertility number amounts to constructing a permutation with fertility $f$, as we did in the proof of Theorem \ref{Thm3}. Showing that a number is an infertility number is more subtle and requires additional tools. Bousquet-M\'elou introduced the notion of the \emph{canonical tree} of a permutation and showed that the shape of a permutation's canonical tree determines that permutation's fertility \cite{Bousquet}. She then asked for an explicit method for computing the fertility of a permutation from its canonical tree. The current author reformulated the notion of a canonical tree in the language of valid hook configurations, defining the \emph{canonical hook configuration} of a permutation \cite{Defant2}. He then described a theorem that yields an explicit method for computing a permutation's fertility from its canonical hook configuration. This result appears as Theorem 2.4 in the more recent article \cite{Defant5}. The following lemma is a consequence of this theorem; we omit the discussion describing how to compute the numbers $e_j$, $\mu_j$, and $\alpha_j$ because our present applications do not require it.   

\begin{lemma}\label{Lem2}
Let $\pi\in S_n$ be a permutation, and let $d_1<\cdots<d_k$ be the descents of $\pi$. There exist integers $e_0,\ldots,e_k,\mu_0,\ldots,\mu_k,\alpha_1,\ldots,\alpha_{k+1}$ (depending on $\pi$) with the following property. A composition $(q_0,\ldots,q_k)$ of $n-k$ into $k+1$ parts is a valid composition of $\pi$ if and only if the following two conditions hold:
\begin{enumerate}[(a)]
\item For every $m\in\{0,1,\ldots,k\}$, \[\sum_{j=m}^{e_m-1}q_j\geq\sum_{j=m}^{e_m-1}\mu_j.\]
\item If $m,p\in\{0,1,\ldots,k\}$ are such that $m\leq p\leq e_m-2$, then \[\sum_{j=m}^pq_j\geq d_{p+1}-d_m-\sum_{j=m+1}^{p+1}\alpha_j.\]
\end{enumerate}
\end{lemma}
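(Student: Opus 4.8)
The plan is to deduce Lemma~\ref{Lem2} directly from Theorem~2.4 of \cite{Defant5}, which characterizes $\mathcal V(\pi)$ in terms of the \emph{canonical hook configuration} of $\pi$; attempting a proof ``from scratch'' would essentially amount to re-proving that theorem, so the point is instead to show that its conclusion can be repackaged in the partial-sum form (a)--(b). Recall the rough shape of that result: the canonical hook configuration is a distinguished element of $\VHC(\pi)$ whose combinatorial data — which descent tops and which other points lie ``under'' each of its hooks, together with the in-order/postorder structure of the associated tree — determines a finite system of linear inequalities in the variables $q_0,\ldots,q_k$, and a composition of $n-k$ into $k+1$ parts lies in $\mathcal V(\pi)$ precisely when it satisfies this system. (This is consistent with Theorem~\ref{Thm6}, which guarantees that a valid hook configuration is recoverable from its valid composition, so that such a linear description is even possible.) The task is then one of bookkeeping: rewrite that system in the form (a)--(b) and read off the integers $e_m,\mu_j,\alpha_j$.

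Concretely, I would first fix the indexing so that the $j$-th hook of the canonical configuration (equivalently, the descent top $(d_j,\pi_{d_j})$) is associated with the ``color block'' of index $j$, and for each $m$ let the integer $e_m$ record the position, in this indexing, where the reach of the coloring anchored at the $m$-th block terminates — morally, the index tied to the northeast endpoint of the relevant hook. Then: the inequalities coming from the requirement that each color actually gets enough points (positivity is automatic, but the canonical configuration forces a minimum number $\mu_j$ of points to receive color $j$, and these minima aggregate over a block up to the cutoff $e_m$) should become exactly condition~(a); and the inequalities coming from geometric realizability of the coloring — that the running total $q_m+\cdots+q_p$ of colored points must already have ``passed'' the descent $d_{p+1}$ once one discounts the $\alpha_j$ forced points encountered along the way — should become exactly condition~(b), with the nesting restriction $m\le p\le e_m-2$ recording that $p$ lies strictly before the cutoff. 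I would then verify that every inequality in the system of Theorem~2.4 is captured by one of these two families, and that no inequality outside this shape is produced.

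The main obstacle, and where almost all of the work sits, is that Theorem~2.4 is stated in the language of the canonical tree / canonical hook configuration (in-order and postorder readings, queue-type insertion conditions), not in terms of partial sums of $(q_0,\ldots,q_k)$; so the translation requires carefully unwinding those definitions and checking that the resulting constraints really do split into a single ``anchored'' inequality~(a) for each $m$ and the family of nested inequalities~(b). Two edge cases need attention: when $e_m=m$ or $e_m=m+1$ the sum in~(a) is empty or a single term and~(b) is vacuous, and when a block contributes no forced points the corresponding $\mu_j$ or $\alpha_j$ equals $0$; in each case one checks that the stated inequalities degenerate to something automatically true. Finally, since being a composition already includes $q_j\ge 1$ for every $j$, I would note that this positivity is a hypothesis of the lemma rather than part of (a)--(b), so there is no need to re-derive it within the equivalence.
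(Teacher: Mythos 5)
Your approach coincides with the paper's: Lemma~\ref{Lem2} is presented there purely as a consequence of Theorem~2.4 of \cite{Defant5}, with no derivation given and even the description of the integers $e_j$, $\mu_j$, $\alpha_j$ explicitly omitted as unnecessary for the applications. Your sketch of how to repackage that theorem's characterization of $\mathcal V(\pi)$ into conditions (a)--(b) is the same strategy, carried out in at least as much detail as the paper itself supplies.
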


Suppose $q=(q_0,\ldots,q_k)$, $q'=(q_0',\ldots,q_k')$, and $q''=(q_0'',\ldots,q_k'')$ are compositions of $n-k$ into $k+1$ parts (where $n$ and $k$ are as in Lemma \ref{Lem2}). We say $q$ \emph{interval dominates} $q'$ and $q''$ if \[\sum_{j=m_1}^{m_2}q_j\geq\min\left\{\sum_{j=m_1}^{m_2}q_j',\sum_{j=m_1}^{m_2}q_j''\right\}\quad\text{whenever }0\leq m_1\leq m_2\leq k.\] If $q',q''\in\mathcal V(\pi)$ and $q$ interval dominates $q'$ and $q''$, then it follows immediately from Lemma \ref{Lem2} that $q\in\mathcal V(\pi)$. In fact, this is the only reason why we need Lemma \ref{Lem2}. 

\begin{theorem}\label{Thm4}
The smallest fertility number that is congruent to $3$ modulo $4$ is $27$. 
\end{theorem}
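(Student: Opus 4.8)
The plan is to combine the already‑established fact that $27$ is a fertility number (Example \ref{Exam1}) with a proof that none of $3,7,11,15,19,23$ --- the positive integers smaller than $27$ that are congruent to $3$ modulo $4$ --- is a fertility number. So I would assume for contradiction that some permutation has fertility $f\in\{3,7,11,15,19,23\}$; replacing it by a normalized permutation with the same fertility, I may take it to be $\pi\in S_n$ with, say, $k$ descents. Then every element of $\mathcal V(\pi)$ is a composition of $N:=n-k$ into $k+1$ parts, and $f=\sum_{(q_0,\ldots,q_k)\in\mathcal V(\pi)}C_{(q_0,\ldots,q_k)}$ by Theorem \ref{Thm5}; since $f\geq 3$, the set $\mathcal V(\pi)$ is nonempty, and $f$ is at least the value of $C_{(q_0,\ldots,q_k)}$ for any single element of $\mathcal V(\pi)$.

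The case $k=0$ is immediate: there $\mathcal V(\pi)=\{(n)\}$, so $f=C_n\leq 23$, which forces $n\leq 4$ and $f\in\{1,2,5,14\}$, a contradiction. For $k\geq 1$ I would first record a crude lower bound on the summands. Any composition of $N$ into $k+1$ positive parts is obtained from $(1,\ldots,1)$ by adding $N-k-1$ units to various parts, and adding a unit to a part of size $p$ multiplies $C_{(q_0,\ldots,q_k)}$ by $C_{p+1}/C_p=2(2p+1)/(p+2)\geq 2$; hence every such composition has $C_{(q_0,\ldots,q_k)}\geq 2^{N-k-1}$. Since $2^5=32>23$, we conclude $N-k-1\leq 4$, that is, $N\in\{k+1,k+2,k+3,k+4,k+5\}$.

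The bulk of the work is then a short case check on $N$, in which the possible values of $C_{(q_0,\ldots,q_k)}$ are read off from the partition of the excess $N-k-1$ among the parts (the remaining parts being $1$), using that each value that actually occurs in $\mathcal V(\pi)$ is $\leq f\leq 23$:
\begin{itemize}
\item $N=k+1$: the only composition is $(1,\ldots,1)$, so $f\leq 1$;
\item $N=k+2$: every composition has value $C_2=2$, so $f$ is even;
\item $N=k+4$: the possible values are $C_4=14$, $C_3C_2=10$, $C_2^3=8$, all even, so $f$ is even;
\item $N=k+5$: the possible values are $C_5=42$, $C_4C_2=28$, $C_3^2=25$, $C_3C_2^2=20$, $C_2^4=16$, and only the last two are $\leq 23$, so $f$ is even.
\end{itemize}
Each of these contradicts $f\equiv 3\pmod 4$.

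The remaining case $N=k+3$ is the genuine obstacle, and the only point where Lemma \ref{Lem2} enters. Here the compositions are of two kinds: those with one part equal to $3$ (value $C_3=5$) and those with two parts equal to $2$ (value $C_2^2=4$), so $f=5a+4b$ where $a$ and $b$ count the members of $\mathcal V(\pi)$ of each kind. Then $f\equiv a\pmod 4$, so $f\equiv 3\pmod 4$ forces $a\geq 3$. I would finish by checking directly that, for positions $i<j$, the composition with $2$'s in positions $i$ and $j$ interval dominates the composition with its single $3$ in position $i$ and the composition with its single $3$ in position $j$; hence, by the interval‑domination consequence of Lemma \ref{Lem2}, if $\mathcal V(\pi)$ contains three compositions of the first kind then it contains the three corresponding compositions of the second kind, so $b\geq 3$ and $f\geq 5\cdot 3+4\cdot 3=27>23$, the final contradiction. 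This last computation also explains why $27$ --- realized by $\zeta_2=3142567$ in Example \ref{Exam1} --- is the true minimum rather than something smaller.
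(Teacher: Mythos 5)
Your proposal is correct and follows essentially the same path as the paper: reduce via Theorem \ref{Thm5} to the case where every valid composition has type $(3,1,\ldots,1)$ or $(2,2,1,\ldots,1)$, write $f=5a+4b$, deduce $a\equiv 3\pmod 4$, and use the componentwise-average composition together with the interval-domination consequence of Lemma \ref{Lem2} to force $b\geq 3$ and hence $f\geq 27$. The only difference is cosmetic: the paper reaches the case $n-k=k+3$ by noting that an odd $f$ requires a valid composition with odd Catalan product (so parts in $\{1,3\}$ with at most one $3$), whereas you get there by the bound $C_q\geq 2^{N-k-1}$ and a short parity case-check on $N-k\in\{1,2,4,5\}$, which is equally valid.
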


\begin{proof}
We saw in Example \ref{Exam1} that $27$ is a fertility number. Assume by way of contradiction that there exists a fertility number $f\in\{3,7,11,15,19,23\}$. Let $n$ be the smallest positive integer such that there exists a permutation in $S_n$ with fertility $f$. Let $\pi\in S_n$ be one such permutation, and let $k$ be the number of descents of $\pi$. We say a composition $c$ \emph{has type} $\lambda$ if $\lambda$ is the partition formed by rearranging the parts of $c$ into nonincreasing order. For example, $(1,2,1,2)$ has type $(2,2,1,1)$. 

Because $|s^{-1}(\pi)|=f$ is odd, Theorem \ref{Thm5} tells us that $\pi$ must have a valid composition $q$ such that $C_q$ is odd. If any of the parts in $q$ were greater than $4$, the sum representing $|s^{-1}(\pi)|$ in Theorem \ref{Thm5} would be at least $42$, which is larger than $f$. If any of the parts were $2$ or $4$, $C_q$ would be even. This shows that all of the parts of $q$ are equal to $1$ or $3$. Furthermore, there is at most one part equal to $3$ (otherwise, the sum in Theorem \ref{Thm5} would be at least $25$). 

We know from Section 2 that every valid composition of $\pi$ is a composition of $n-k$ into $k+1$ parts. If $q=(1,1,\ldots,1)$, then $n=2k+1$. In this case, $(1,1,\ldots,1)$ is the only valid composition of $\pi$ (it is the only composition of $n-k$ into $k+1$ parts), so it follows from Theorem \ref{Thm5} that $|s^{-1}(\pi)|=1$. This is a contradiction, so $q$ must have type $(3,1,\ldots,1)$. Since $q$ is a composition of $n-k$, we must have $n=2k+3$. This implies that every composition of $n-k$ into $k+1$ parts is of type $(3,1,\ldots,1)$ or of type $(2,2,1,\ldots,1)$. Thus, every valid composition of $\pi$ is of one of these types.  

Let $Q_1,\ldots,Q_a$ be the valid compositions of $\pi$ of type $(3,1,\ldots,1)$, and let $b$ be the number of valid compositions of $\pi$ of type $(2,2,1,\ldots,1)$. By Theorem \ref{Thm5}, $5a+4b=f$. Reading this equation modulo $4$ shows that $a\equiv 3\pmod 4$. Since $f\leq 23$, we must have $a=3$. For $1\leq u<v\leq 3$, let $Q_{u,v}$ be the composition whose $i^\text{th}$ part is the arithmetic mean of the $i^\text{th}$ part of $Q_u$ and the $i^\text{th}$ part of $Q_v$. It is straightforward to see that $Q_{u,v}$ is a composition of $n-k$ into $k+1$ parts that has type $(2,2,1,\ldots,1)$ and that interval dominates $Q_u$ and $Q_v$. According to the discussion preceding this theorem, $Q_{1,2}$, $Q_{1,3}$, and $Q_{2,3}$ are valid compositions of $\pi$. Consequently, $b\geq 3$. It follows that $f=5a+4b\geq 27$, which is our desired contradiction. 
\end{proof}

Among the bulleted statements in the introduction, only the last one remains to be proven. The proof requires us to use Proposition \ref{Prop2}, which is stated below. The proof of this proposition relies on the following lemma, which is interesting in its own right.   

\begin{lemma}\label{Lem1}
Let $\pi$ be a sorted permutation with descents $d_1<\cdots<d_k$. Suppose there is an index $i\in\{1,\ldots,k\}$ such that $q_i=1$ for all $(q_0,\ldots,q_k)\in\mathcal V(\pi)$. If $H$ is a hook in a valid hook configuration of $\pi$ with southwest endpoint $(d_i,\pi_{d_i})$, then $H$ is a stationary hook of $\pi$.  
\end{lemma}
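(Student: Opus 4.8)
The plan is to fix a valid hook configuration $\mathcal H=(H_1,\ldots,H_k)$ of $\pi$ in which the $i^\text{th}$ hook is the given hook $H$ with southwest endpoint $(d_i,\pi_{d_i})$, and to show that \emph{any} valid hook configuration $\mathcal H'=(H_1',\ldots,H_k')$ of $\pi$ has $H_i'=H$. Since the southwest endpoint of $H_i'$ is forced to be the descent top $(d_i,\pi_{d_i})$, it suffices to show that the northeast endpoint of $H_i'$ coincides with that of $H$. The hypothesis $q_i=1$ for every valid composition says that in every induced coloring exactly one point receives the color of the $i^\text{th}$ hook, and (as noted in the paper's footnote) that point is always the one immediately to the right of the southwest endpoint, i.e. $(d_i+1,\pi_{d_i+1})$. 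So the combinatorial content I want to extract is: the $i^\text{th}$ hook of \emph{any} valid hook configuration of $\pi$ ``captures'' only the single point $(d_i+1,\pi_{d_i+1})$ beneath it.

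The key steps, in order: (1) Recall how colors propagate — a point $(x,\pi_x)$ with $d_i<x<$ (northeast endpoint index of $H_i'$) that is not itself a northeast endpoint and not shadowed by a nested hook gets the color of $H_i'$; more precisely, the points colored with $H_i'$'s color are exactly those lying directly below $H_i'$ that see $H_i'$ when looking up. (2) Use $q_i=1$ to deduce that for $\mathcal H'$, the hook $H_i'$ has \emph{no} point strictly below it other than $(d_i+1,\pi_{d_i+1})$: any other point below $H_i'$ would either be colored with $H_i'$'s color (contradicting $q_i=1$) or be the northeast endpoint of some hook $H_j'$ nested under $H_i'$, but then the southwest endpoint of $H_j'$ — a descent top lying below $H_i'$ and to the right of $(d_i+1,\pi_{d_i+1})$ — would also be colored with $H_i'$'s color, again contradicting $q_i=1$. (3) Conclude the region strictly below $H_i'$ contains exactly the one point $(d_i+1,\pi_{d_i+1})$. (4) Do the same analysis for the fixed configuration $\mathcal H$ to conclude the region strictly below $H=H_i$ also contains exactly the one point $(d_i+1,\pi_{d_i+1})$. (5) Now argue the northeast endpoints agree: the northeast endpoint of $H_i'$ is the leftmost point $(j,\pi_j)$ with $j>d_i$, $\pi_j>\pi_{d_i}$, such that no point lies strictly below the resulting hook except $(d_i+1,\pi_{d_i+1})$ — but this characterization is forced by $\pi$ alone (it does not reference $\mathcal H'$), so it must also describe the northeast endpoint of $H=H_i$. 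Hence $H_i'=H$, and since $\mathcal H'$ was arbitrary, $H$ is stationary.

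For step (5) I will be careful: I should phrase it as ``the northeast endpoint of a hook with southwest endpoint $(d_i,\pi_{d_i})$ in a valid hook configuration of $\pi$ is uniquely determined by the condition that exactly one point lies strictly below the hook.'' If $(j,\pi_j)$ and $(j',\pi_j')$ with $j<j'$ were two such candidate northeast endpoints both realized in valid hook configurations, then the hook to $(j',\pi_{j'})$ would have the point $(j,\pi_j)$ strictly below it (since $d_i<j<j'$ and $\pi_j<\pi_{j'}$, as $(j,\pi_j)$ is not shadowed — here one uses condition 2, no point lies directly above a hook, to rule out $\pi_j>\pi_{j'}$), giving at least two points below it when combined with $(d_i+1,\pi_{d_i+1})$, a contradiction. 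So the northeast endpoint is unique across all valid hook configurations, finishing the argument.

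\textbf{Main obstacle.} The delicate point is step (2)/(5): carefully justifying, using only conditions 1--3 and the coloring rules, that ``no point strictly below $H_i'$ other than $(d_i+1,\pi_{d_i+1})$'' really follows from $q_i=1$, including handling nested hooks correctly (a northeast endpoint below $H_i'$ is uncolored, so it does not directly violate $q_i=1$ — one must instead look at its southwest endpoint, or at the points it in turn shadows, and chase this down). I expect this bookkeeping about which points beneath a hook actually receive that hook's color — together with the uniqueness argument for the northeast endpoint — to be the heart of the proof, while everything else is routine.
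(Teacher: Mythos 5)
Your argument has a genuine gap: the central claim in steps (2)--(4), that $q_i=1$ forces the region strictly below the $i^\text{th}$ hook to contain only the single point $(d_i+1,\pi_{d_i+1})$, is false. The number $q_i$ counts only the points that \emph{see} the $i^\text{th}$ hook when looking upward; points below that hook which are covered by hooks nested beneath it receive those hooks' colors, and northeast endpoints are uncolored, so neither kind contributes to $q_i$. Your dichotomy omits the first kind entirely, and your fix for the second kind fails because the southwest endpoint of a nested hook $H_j'$ need not be an \emph{additional} $H_i'$-colored point: it can be $(d_i+1,\pi_{d_i+1})$ itself, or it can look up (around its own vertical segment) and see yet another nested hook rather than $H_i'$. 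Concretely, take $\pi=42135$: the unique valid hook configuration has its first hook running from $(1,4)$ to $(5,5)$ with a nested hook from $(2,2)$ to $(4,3)$; the unique valid composition is $(1,1,1)$, so $q_1=1$ for every valid composition, yet the three points $(2,2)$, $(3,1)$, $(4,3)$ all lie strictly below the first hook. With this claim gone, the uniqueness argument in step (5) (``the northeast endpoint is the leftmost point leaving exactly one point below the hook'') has no support, so the proposal does not establish the lemma.

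The paper's proof supplies precisely the idea you are missing. Given $\mathcal H$ containing $H$ (with northeast endpoint $(j,\pi_j)$) and a hypothetical $\mathcal H'$ whose $i^\text{th}$ hook $H_i'$ ends farther right at $(j',\pi_{j'})$, it forms the hybrid configuration $\mathcal H''=(H_1',\ldots,H_i',H_{i+1},\ldots,H_r,H_{r+1}',\ldots,H_k')$, transplanting into $\mathcal H'$ exactly those hooks of $\mathcal H$ whose southwest endpoints lie below $H$. In $\mathcal H''$ the point $(j,\pi_j)$ is no longer a northeast endpoint and is not shadowed by any transplanted hook (by condition 2 applied in $\mathcal H$), so it sees $H_i'$ and receives its color along with $(d_i+1,\pi_{d_i+1})$, forcing $q_i''\geq 2$ and contradicting the hypothesis. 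Some such surgery on configurations appears unavoidable, since, as the example above shows, the condition $q_i=1$ by itself does not control the set of points lying below the hook.
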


\begin{proof}
Recall from the previous section that we write valid hook configurations as tuples of hooks. Let $\mathcal H=(H_1,\ldots,H_k)$ be a valid hook configuration containing the hook $H$. Necessarily, we have $H=H_i$ (this is simply due to the conventions we chose in Section 2 concerning how to order hooks). Suppose by way of contradiction that there is a valid hook configuration $\mathcal H'=(H_1',\ldots,H_k')$ with $H_i'\neq H$. The southwest endpoint of $H_i'$ must be $(d_i,\pi_{d_i})$. Let $(j,\pi_j)$ and $(j',\pi_{j'})$ be the northeast endpoints of $H_i$ and $H_i'$, respectively. Without loss of generality, we may assume $j<j'$.

There exists $r\in\{i,\ldots,k\}$ such that $(d_{i+1},\pi_{d_{i+1}}),\ldots,(d_r,\pi_{d_r})$ are the descent tops of $\pi$ lying below $H$. Let \[\mathcal H''=(H_1',\ldots,H_i',H_{i+1},\ldots,H_r,H_{r+1}',\ldots,H_k').\] One can check that $\mathcal H''$ is a valid hook configuration of $\pi$. In the coloring of the plot of $\pi$ induced by $\mathcal H''$, both $(d_i+1,\pi_{d_i+1})$ and $(j,\pi_j)$ are given the same color as the hook $H_i'$. Letting $(q_0'',\ldots,q_k'')$ denote the valid composition of $\pi$ induced by $\mathcal H''$, we have $q_i''\geq 2$. This contradicts our hypothesis. 
\end{proof}

\begin{proposition}\label{Prop2}
Assume $n\geq 3$. Let $\pi=\pi_1\cdots\pi_n$ be a sorted permutation with descents $d_1<\cdots<d_k$. Suppose there is an index $i\in\{1,\ldots,k\}$ such that $q_i=1$ for all $(q_0,\ldots,q_k)\in\mathcal V(\pi)$. Let $\mathcal X=\{(q_0,\ldots,q_{i-1},q_{i+1},\ldots,q_k):(q_0,\ldots,q_k)\in\mathcal V(\pi)\}$. There exists a permutation $\zeta\in S_{n-2}$ such that $\mathcal V(\zeta)=\mathcal X$.   
\end{proposition}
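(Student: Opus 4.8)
The idea is to take the permutation $\pi$ and surgically remove the hook $H$ that Lemma~\ref{Lem1} guarantees is stationary, producing a shorter permutation $\zeta$ whose valid compositions are exactly those of $\pi$ with the forced $i$-th coordinate deleted. By Lemma~\ref{Lem1}, every valid hook configuration of $\pi$ contains a common hook $H$ with southwest endpoint $(d_i,\pi_{d_i})$; write $(j,\pi_j)$ for its northeast endpoint. Since $q_i=1$ in every valid composition, the only point colored with the color of $H$ is the point $(d_i+1,\pi_{d_i+1})$ immediately to the right of the southwest endpoint, and in particular no descent top lies below $H$ (if one did, Lemma~\ref{Lem1}'s argument, or a direct coloring argument, would force $q_i\geq 2$). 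Concretely this says $d_i+1=j-1$ or, more precisely, that the only entries strictly between positions $d_i$ and $j$ that sit below height $\pi_j$ form an ``isolated'' block; combined with condition 2 of a valid hook configuration (no point directly above a hook) one checks that the segment $\pi_{d_i}\,\pi_{d_i+1}\,\pi_j$ looks, up to order-isomorphism, like a descent-top/valley/peak pattern $b\,a\,c$ with $a<b<c$. Deleting the two entries $\pi_{d_i}$ and $\pi_{d_i+1}$ (equivalently, contracting the stationary hook the way $\sigma$ is formed from $\pi$ in Proposition~\ref{Prop1}, but now also throwing away the singleton between them) and renormalizing yields a permutation $\zeta\in S_{n-2}$.

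\textbf{Key steps, in order.} First I would invoke Lemma~\ref{Lem1} to name the stationary hook $H=(d_i,\pi_{d_i})\to(j,\pi_j)$ and record the structural consequences of $q_i\equiv 1$: no descent top lies below $H$, and by condition~2 nothing in the plot lies directly above the vertical leg of $H$, so the entry $\pi_{d_i+1}$ is the unique point below $H$. Second, I would define $\zeta$ to be the normalization (order-isomorphic copy in $S_{n-2}$) of the word obtained from $\pi_1\cdots\pi_n$ by deleting the entries in positions $d_i$ and $d_i+1$; one must check that this deletion decreases the number of descents by exactly one (the descent at $d_i$ disappears, positions $d_i-1$ and $j$ become adjacent, and because $\pi_{d_i-1}<\pi_{d_i}$ -- as $d_i-1$ is not a descent preceding the descent $d_i$ unless... -- actually one should check whether $d_i-1$ is itself a descent and handle the ascent/descent status at the splice point; this is the routine but slightly fiddly bookkeeping). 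Third, and this is the heart, I would set up a bijection $\VHC(\pi)\to\VHC(\zeta)$ by: erasing $H$; reconnecting each hook whose northeast endpoint was $(d_i,\pi_{d_i})$ so that it instead lands on $(j,\pi_j)$ (there can be at most one such, and it exists precisely when $d_i-1$ is a descent); and leaving every other hook untouched. One checks this is well-defined (the rerouted hook still satisfies conditions 1--3, using that no point was above $H$ and no descent top was below $H$) and invertible (given a valid hook configuration of $\zeta$, re-expand by reinserting the two points and the hook $H$ in the unique available way). Fourth, I would trace through the induced colorings: the point $\pi_{d_i+1}$ that carried the color of $H$ is deleted, the southwest endpoint $\pi_{d_i}$ is deleted, and the northeast endpoint $(d_i,\pi_{d_i})$ of the rerouted hook -- which was uncolored -- is likewise deleted, while all remaining points keep their colors. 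This shows the valid composition of the image in $\VHC(\zeta)$ is obtained from that of $\pi$ by simply dropping the coordinate $q_i$ (which equals $1$), so the image of $\mathcal V(\pi)$ under coordinate-deletion is $\mathcal V(\zeta)$; by Theorem~\ref{Thm6} (injectivity of $\VHC\to\mathcal V$) applied to both $\pi$ and $\zeta$, this containment of valid-composition sets is in fact an equality, giving $\mathcal V(\zeta)=\mathcal X$.

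\textbf{Main obstacle.} The delicate point is verifying that the hook-rerouting map genuinely lands in $\VHC(\zeta)$ and is a bijection -- that is, that conditions 1--3 are preserved in both directions. The worry is the interaction of the rerouted hook (the one whose northeast endpoint moves from $(d_i,\pi_{d_i})$ to $(j,\pi_j)$) with hooks that previously passed just below or beside $H$. This is exactly where one uses, crucially, that $q_i=1$ forces there to be no descent top below $H$ and hence (together with condition~2) essentially nothing below $H$ except the single point $\pi_{d_i+1}$; without that hypothesis the deletion could create crossings or leave a point stranded above a hook. I expect the argument to mirror the ``surgery on a stationary hook'' bijection already used in the proof of Proposition~\ref{Prop1}, with the additional observation that contracting an ``empty'' stationary hook (one with $q_i=1$) lets us remove two positions rather than splitting the permutation into two pieces. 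A secondary, purely bookkeeping obstacle is confirming $\zeta\in S_{n-2}$ has exactly $k-1$ descents and that the splice at positions $d_i-1,j$ behaves correctly in the two cases (whether or not $d_i-1$ is a descent, equivalently whether or not a hook had northeast endpoint $(d_i,\pi_{d_i})$); this I would dispatch with a short case analysis rather than a general formula.
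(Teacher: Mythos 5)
Your plan breaks down at the structural claim you extract from the hypothesis $q_i=1$: it is \emph{not} true that $(d_i+1,\pi_{d_i+1})$ is the only point below $H$, nor that no descent top lies below $H$, nor that the northeast endpoint of $H$ sits at position $d_i+2$. The hypothesis only says that exactly one point receives the \emph{color} of $H$ in every induced coloring; other points below $H$ can be shielded from $H$ by hooks nested below it (or be northeast endpoints of such hooks), and then they contribute to other coordinates, not to $q_i$. A concrete counterexample is $\pi=42135$, with descents $d_1=1$, $d_2=2$. Its unique valid hook configuration consists of the hooks $(1,4)\to(5,5)$ and $(2,2)\to(4,3)$, and the unique valid composition is $(1,1,1)$, so the hypothesis of the proposition holds with $i=1$ and $H$ the hook $(1,4)\to(5,5)$. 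Yet there are three points below $H$, one of them ($(2,2)$) a descent top, and $j=5\neq d_1+2$. Your surgery deletes positions $d_1=1$ and $d_1+1=2$ and normalizes, giving $\zeta=123$ with $\mathcal V(\zeta)=\{(3)\}$, whereas $\mathcal X=\{(1,1)\}$. So the permutation you construct fails the conclusion, and the rerouting bijection you describe cannot exist for it; the error is not in the bookkeeping about descents at the splice point but in the false geometric consequence drawn from $q_i=1$.

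The correct surgery, which is what the paper does, is different: delete the hook $H$ together with its two \emph{endpoints} (the points in positions $d_i$ and $j$), keeping every point of the region $\mu$ below $H$; a hook whose southwest endpoint was the northeast endpoint of $H$ is re-attached to the rightmost surviving point of $\mu$, and a hook whose northeast endpoint was the southwest endpoint of $H$ is re-attached to the leftmost surviving point of $\mu$. To make this deletion legal (no point left stranded above a hook, no new crossings, and the rightmost point of $\mu$ guaranteed to be a descent top), the paper first performs two vertical sliding operations that leave $\mathcal V(\pi)$ unchanged. In this argument the hypothesis $q_i=1$ is used only through Lemma \ref{Lem1}, to make $H$ stationary so the map is defined on all of $\VHC(\pi)$, and to ensure the deleted coordinate equals $1$; it imposes no constraint on how many points lie below $H$. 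In the example above this surgery produces $\zeta=213$, and indeed $\mathcal V(213)=\{(1,1)\}=\mathcal X$.
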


\begin{proof}
According to Lemma \ref{Lem1}, $\pi$ has a stationary hook $H$ with southwest endpoint $(d_i,\pi_{d_i})$. Let $\lambda_{\text I},\lambda_{\text{II}},\lambda_{\text{III}},\lambda_{\text{IV}},\mu$ be the parts of the plot of $\pi$ as indicated in Figure \ref{Fig8}. Let us slide all of the points of $\lambda_{\text I}\cup\lambda_{\text{II}}\cup\mu$ up by some integral distance so that the lowest point of $\lambda_{\text I}\cup\lambda_{\text{II}}\cup\mu$ is now higher than the highest point of $\lambda_{\text{III}}\cup\lambda_{\text{IV}}$. We can then slide the points in $\lambda_{\text I}\cup\lambda_{\text{II}}$ up by another integral distance so that the lowest point in $\lambda_{\text I}\cup\lambda_{\text{II}}$ is now higher than the highest point in $\mu$. These two operations, illustrated in Figure \ref{Fig8}, produce a new permutation $\pi'$. 

Given a valid hook configuration of $\pi$, we obtain a valid hook configuration of $\pi'$ by keeping the hooks attached to their endpoints throughout these two sliding operations. Every valid hook configuration of $\pi'$ is obtained in this way because we can easily undo these sliding operations. Each valid hook configuration of $\pi$ induces a valid composition of $\pi$, and the corresponding valid hook configuration of $\pi'$ induces a valid composition of $\pi'$. These two valid compositions are identical because no points or hooks were ever moved horizontally and no hooks could have moved through each other during the sliding. Therefore, $\mathcal V(\pi)=\mathcal V(\pi')$. To ease notation, let us replace $\pi$ with this new permutation $\pi'$. In other words, we have shown that, without loss of generality, we may assume the plot of $\pi$ has the shape depicted in the rightmost part of Figure \ref{Fig8}.

\begin{figure}[t]
\begin{center}
\includegraphics[width=.75\linewidth]{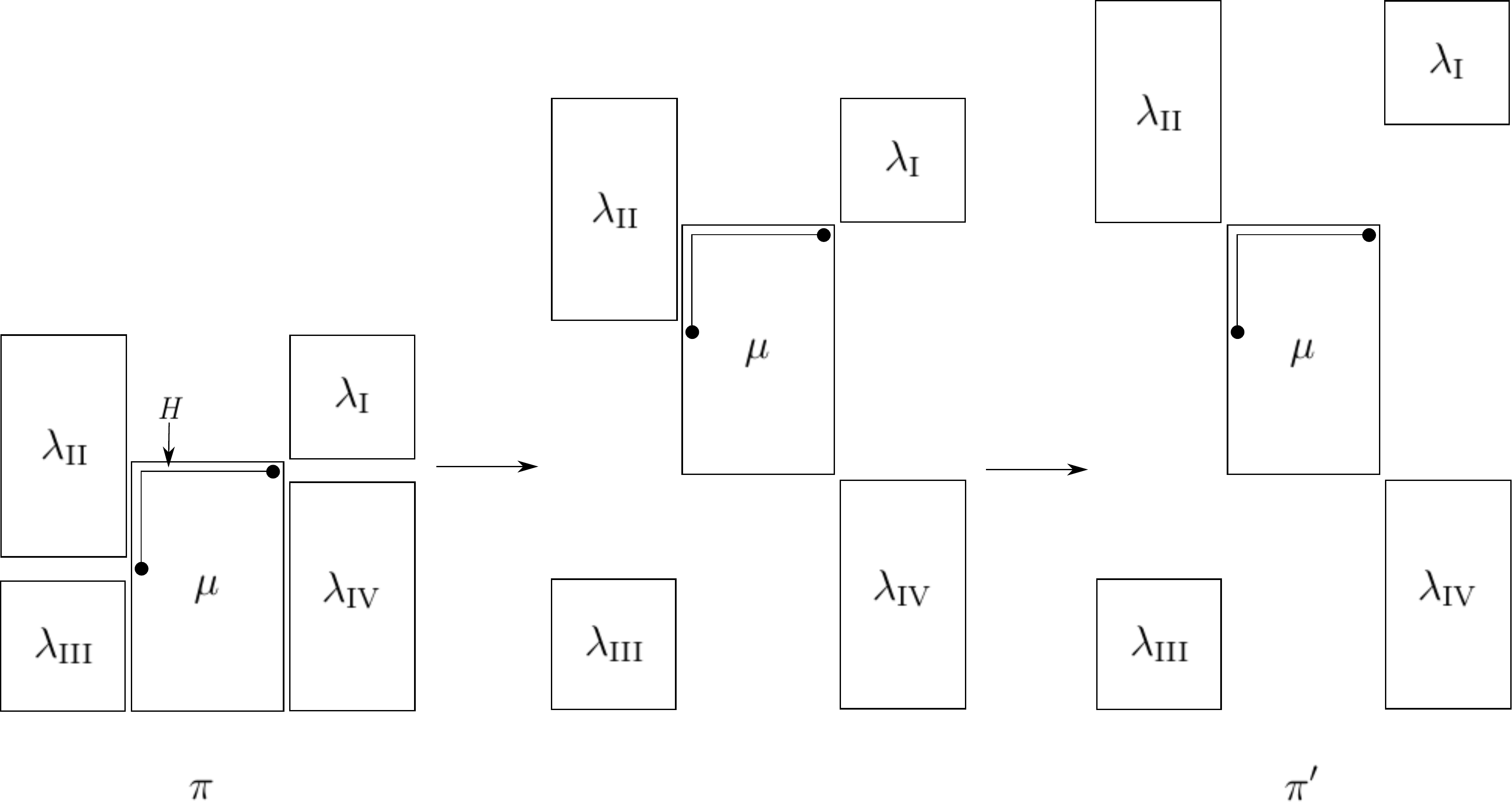}
\caption{The two sliding operations described in the proof of Proposition \ref{Prop2}.}
\label{Fig8}
\end{center}  
\end{figure}

Let us now remove the hook $H$ and its endpoints from the plot of $\pi$. After shifting the remaining points in $\mu$ to the left by $1$ and shifting the points in $\lambda_{\text{I}}\cup\lambda_{\text{IV}}$ left by $2$, we obtain the plot of a permutation $\xi$. We claim that $\mathcal V(\xi)=\mathcal X$. Indeed, there is a natural bijection $\varphi:\VHC(\pi)\to\VHC(\xi)$. To apply $\varphi$ to a valid hook configuration of $\pi$, we first leave unchanged every hook whose endpoints were not deleted (i.e., those hooks whose endpoints were not also endpoints of $H$). If there was a hook whose southwest endpoint was the northeast endpoint of $H$, replace its southwest endpoint with the rightmost remaining point from $\mu$. This is allowed because the rightmost remaining point in $\mu$ is a descent top of $\pi$ ($\lambda_{\text{IV}}$ lies below $\mu$). If there was a hook whose northeast endpoint was the southwest endpoint of $H$, replace its northeast endpoint with the leftmost remaining point from $\mu$. See Figure \ref{Fig9} for two examples of applications of $\varphi$. 

If $\mathcal H\in\VHC(\pi)$ induces a valid composition $(q_0,\ldots,q_k)\in\mathcal V(\pi)$, then $\varphi(\mathcal H)$ induces the valid composition $(q_0,\ldots,q_{i-1},q_{i+1},\ldots,q_k)\in\mathcal V(\xi)$. It follows that $\mathcal V(\xi)=\mathcal X$, as desired. Finally, we can normalize the permutation $\xi$ to obtain a permutation $\zeta\in S_{n-2}$ with $\mathcal V(\zeta)=\mathcal X$.  
\end{proof}

\begin{figure}[t]
\begin{center}
\includegraphics[width=.75\linewidth]{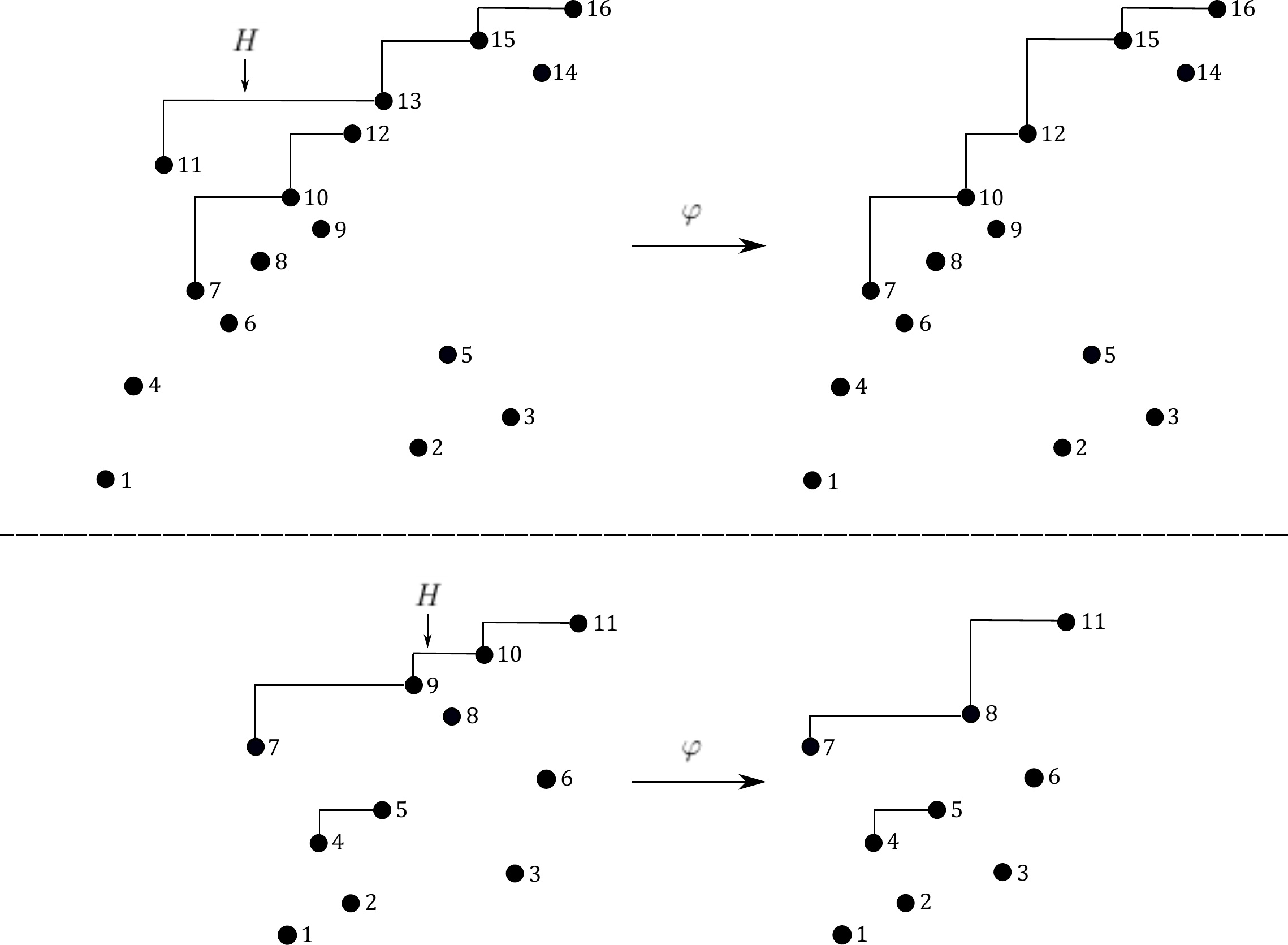}
\caption{Two example applications of the map $\varphi$ from the proof of Proposition \ref{Prop2}}
\label{Fig9}
\end{center}  
\end{figure}

The following corollary is now an immediate consequence of Theorem \ref{Thm5}.

\begin{corollary}\label{Cor2}
In the notation of Proposition \ref{Prop2}, the permutation $\zeta\in S_{n-2}$ has the same fertility as $\pi$. 
\end{corollary}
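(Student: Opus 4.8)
The plan is to unwind the definitions on both sides of the claimed equality and invoke Theorem \ref{Thm5} twice. By Proposition \ref{Prop2}, the permutation $\zeta$ satisfies $\mathcal V(\zeta)=\mathcal X=\{(q_0,\ldots,q_{i-1},q_{i+1},\ldots,q_k):(q_0,\ldots,q_k)\in\mathcal V(\pi)\}$, so all the structural work has already been done and what remains is bookkeeping. First I would check that the coordinate-deletion map $\Phi\colon\mathcal V(\pi)\to\mathcal X$ sending $(q_0,\ldots,q_k)$ to $(q_0,\ldots,q_{i-1},q_{i+1},\ldots,q_k)$ is a bijection. Surjectivity is immediate from the definition of $\mathcal X$, and injectivity is where the hypothesis of Proposition \ref{Prop2} is used: since $q_i=1$ for every $(q_0,\ldots,q_k)\in\mathcal V(\pi)$, the deleted entry is always $1$ and hence carries no information, so two valid compositions of $\pi$ with the same image under $\Phi$ must coincide.

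Next I would record that $\Phi$ preserves the Catalan weight. Because $C_1=1$, for each $(q_0,\ldots,q_k)\in\mathcal V(\pi)$ we have
\[C_{(q_0,\ldots,q_k)}=\prod_{t=0}^{k}C_{q_t}=C_{q_i}\prod_{\substack{0\leq t\leq k\\ t\neq i}}C_{q_t}=\prod_{\substack{0\leq t\leq k\\ t\neq i}}C_{q_t}=C_{\Phi(q_0,\ldots,q_k)}.\]

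Combining these two observations with Theorem \ref{Thm5} applied to $\pi$ and then to $\zeta$ gives
\[|s^{-1}(\pi)|=\sum_{q\in\mathcal V(\pi)}C_{q}=\sum_{q\in\mathcal V(\pi)}C_{\Phi(q)}=\sum_{q'\in\mathcal X}C_{q'}=\sum_{q'\in\mathcal V(\zeta)}C_{q'}=|s^{-1}(\zeta)|,\]
which is the desired conclusion. I do not anticipate any real obstacle here: the corollary is essentially a reformulation of Proposition \ref{Prop2} in the language of fertilities, and the only subtlety is making sure the $i^{\text{th}}$ coordinate really is forced to equal $1$ (so that deletion is simultaneously a bijection and weight-preserving), which is exactly the standing hypothesis inherited from Proposition \ref{Prop2}.
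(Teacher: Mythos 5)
Your argument is correct and is exactly the route the paper intends: the paper states the corollary as an immediate consequence of Theorem \ref{Thm5} together with $\mathcal V(\zeta)=\mathcal X$, and your write-up simply makes explicit the two obvious ingredients (the deletion map is a bijection because the deleted coordinate is forced to be $1$, and $C_1=1$ so the Catalan weights agree). No gaps; this is just a more detailed version of the same proof.
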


We can finally prove the last of our main theorems. As mentioned in the introduction, this theorem reduces the problem of determining whether a given positive integer is a fertility number to a finite problem. 

\begin{theorem}\label{Thm7}
If $f$ is a positive fertility number, then there exist a positive integer $n\leq f+1$ and a permutation $\pi\in S_n$ such that $f=|s^{-1}(\pi)|$. 
\end{theorem}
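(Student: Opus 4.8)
The plan is to start with an arbitrary permutation $\pi \in S_n$ of fertility $f$ with $n$ minimal, and show that minimality forces $n \leq f+1$. Since $f > 0$, the permutation $\pi$ is sorted, so it has at least one valid hook configuration, and every valid composition of $\pi$ is a composition of $n-k$ into $k+1$ parts, where $k = \mathrm{des}(\pi)$. The key quantitative fact I would extract is that $n = (n-k) + k$, so bounding $n$ amounts to bounding both $k$ and $n-k$ in terms of $f$.

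First I would handle $n-k$. If any valid composition $(q_0,\ldots,q_k) \in \mathcal V(\pi)$ had a part $q_t \geq 2$, then since $C_{q_t} \geq 2$ and every other $C_{q_j} \geq 1$, Theorem \ref{Thm5} gives $|s^{-1}(\pi)| \geq C_{(q_0,\ldots,q_k)} \geq 2$; more usefully, summing over all valid compositions and using that each contributes at least $1$ (in fact at least $2$ if it has a part $\geq 2$), one gets $f = |s^{-1}(\pi)| \geq$ (number of valid compositions), and also $f \geq C_q$ for each individual $q \in \mathcal V(\pi)$. Since $C_j \geq j$ for all $j \geq 1$ (indeed $C_j$ grows much faster), a single valid composition $q$ with a large part would already force $f$ to be large. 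Concretely, if $q = (q_0,\ldots,q_k) \in \mathcal V(\pi)$ then $f \geq C_{q} = \prod_t C_{q_t} \geq \sum_t C_{q_t} \geq \sum_t q_t = n-k$ — wait, the inequality $\prod C_{q_t} \geq \sum C_{q_t}$ needs at least two factors exceeding $1$ or a single large factor, so I would instead argue directly: among all valid compositions, pick one; if all its parts are $1$ then $n-k = k+1$ and I bound $k$ below (next paragraph); otherwise it has a part $\geq 2$, and then $C_q \geq 2$, and iterating/combining with the count of valid compositions yields $n - k \leq f$. I expect the clean statement to be $n-k \leq f$ directly, via $C_q \geq q_0 + \cdots + q_k = n-k$ whenever $q$ has a part $\geq 2$ (using $C_a C_b \geq C_{a+b-1} \geq a+b-1$ type estimates), handling the all-ones case separately.

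Next I would bound $k$. Here is where I expect the main obstacle: controlling the number of descents. The natural tool is Proposition \ref{Prop2} together with Corollary \ref{Cor2}: if there is an index $i$ with $q_i = 1$ for \emph{every} valid composition, then I can pass to a permutation $\zeta \in S_{n-2}$ with the same fertility $f$, contradicting minimality of $n$. Therefore, for \emph{every} index $i \in \{1,\ldots,k\}$, there is some valid composition with $q_i \geq 2$. (I should check the index $0$ separately, or note the statement of Proposition \ref{Prop2} is about $i \in \{1,\ldots,k\}$, and similarly whether a symmetric statement handles $q_0$.) Now I combine this with the fertility formula: for each $i$ there is a valid composition contributing a Catalan product that is even/at least $2$; but more carefully, I want a \emph{lower bound on the number of valid compositions or on the total sum} that grows with $k$. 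One approach: the $k$ valid compositions witnessing $q_i \geq 2$ for $i = 1,\ldots,k$ need not be distinct, but each has Catalan product $\geq 2$; if they were all equal to a single $q$, then $q$ has $q_i \geq 2$ for all $i \in \{1,\ldots,k\}$, so $n - k = \sum q_j \geq 2k$, i.e. $n \geq 3k$, combined with $n - k \leq f$ this gives $2k \leq f$ hence $k \leq f/2$ and $n \leq n-k + k \leq f + f/2$ — not quite good enough, so I would refine. Better: use that distinct valid compositions each contribute $\geq 1$ to the sum $f$, and that the witnessing compositions, grouped by which ones coincide, still force enough of them; alternatively bound $k+1 \leq n - k \leq f$ directly in the all-ones-composition case and, in the remaining case, observe $k \leq n - k$ always fails only when... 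Let me restructure: from $n - k \leq f$ I get $n \leq f + k$, so it suffices to show $k \leq 1$, OR to show $n - k \geq k$ so that $k \leq n-k \leq f$ giving $n = (n-k)+k \leq 2f$ — still off by a factor. The sharp bound $n \leq f+1$ suggests that in fact $k \leq f - (n-k) + 1$, i.e. I want $n = (n-k) + k \leq f + 1$, which given $n - k \leq f$ would follow from $k \leq f + 1 - (n-k)$. The cleanest route: show that $\mathcal V(\pi)$ has at least $k$ elements when no coordinate is uniformly $1$ is too weak; instead I suspect one shows $f \geq |\mathcal V(\pi)| \geq$ something like $n - k$, and separately $f \geq 2k - $ (overlap), and the honest argument assembles these. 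The real work — and the step I flag as the crux — is proving $n \leq f+1$ exactly rather than up to a constant factor; I anticipate it requires combining (i) $n - k \leq f$ with (ii) the Proposition \ref{Prop2} consequence phrased as: for each $i$, $\sum_{q \in \mathcal V(\pi)} (q_i - 1) \geq 1$, hence $\sum_{q \in \mathcal V(\pi)} \sum_{i} (q_i - 1) \geq k$, i.e. $\sum_{q}\big((n-k) - (k+1)\big) \geq k$, giving $|\mathcal V(\pi)| \cdot (n - 2k - 1) \geq k$; if $n = 2k+1$ then $\mathcal V(\pi) = \{(1,\ldots,1)\}$ and $f = 1$, $n = 2$ or handled trivially; if $n > 2k+1$ then $n - 2k - 1 \geq 1$ and we need a matching upper bound, and $f \geq |\mathcal V(\pi)| \cdot (\text{min Catalan product})$. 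Putting $f \geq |\mathcal V(\pi)|$ and $|\mathcal V(\pi)|(n-2k-1) \geq k$ with $n - k \leq f$: if $n - 2k - 1 \geq 1$ then $k \leq |\mathcal V(\pi)|(n-2k-1) \leq f(n-2k-1)$, hmm this balloons. I would therefore more likely argue: $n - k \leq f$ handles the "width", and for the "number of descents" I use that each descent top is the southwest endpoint of a hook contributing to some valid composition with a $2$, and bound $k$ by the total multiplicity in the fertility sum, getting $k \leq f - 1$ or so, whence $n = (n-k) + k \leq f + (f-1)$, and then tighten by noting the two bounds cannot both be near-tight simultaneously. The honest expectation: the final inequality $n \le f+1$ comes out of a single clean estimate $n \le |s^{-1}(\pi)| + 1$ proved by showing every valid composition $q$ satisfies $C_q \ge |q| - k = n - 2k$ hmm — I will settle the exact bookkeeping in the write-up, but the conceptual content is entirely: (1) minimality $+$ Corollary \ref{Cor2} kills uniformly-$1$ coordinates; (2) Catalan numbers dominate their index; (3) arithmetic.

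In summary: take $\pi \in S_n$ of fertility $f$ with $n$ minimal; use Corollary \ref{Cor2} to conclude no interior coordinate of valid compositions is identically $1$; use $C_j \geq j$ (with $C_j$ in fact super-linear, giving lots of slack) in Theorem \ref{Thm5} to bound $n - k$ above by $f$; use the "no identically-$1$ coordinate" fact to bound $k$; and combine. The main obstacle is getting the \emph{exact} constant $f+1$ rather than merely $O(f)$, which will require being careful about the all-ones case (where $f = 1$, $n \le 2 = f+1$, the boundary case) and about how the linear Catalan bound interacts with the descent count.
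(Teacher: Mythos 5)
Your first step matches the paper exactly: take $n$ minimal, and use Proposition \ref{Prop2}/Corollary \ref{Cor2} to conclude that no coordinate of the valid compositions is identically $1$ (your worry about the index $0$ is reasonable, but it is the same issue the paper faces, so it is not where you lose ground). The genuine gap is everything after that: the quantitative argument that converts ``every coordinate has some valid composition with that part $\geq 2$'' into $n\leq f+1$ is never actually produced, and the specific estimates you float are false or admittedly insufficient. In particular, the proposed clean inequality $C_q\geq q_0+\cdots+q_k=n-k$ for any valid composition $q$ with a part $\geq 2$ fails badly: for $q=(1,1,\ldots,1,2)$ one has $C_q=2$ while $n-k=k+2$ (this is exactly what happens for the permutations $\xi_m$ in the proof of Theorem \ref{Thm3}). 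More structurally, your strategy of bounding $n-k$ and $k$ separately, each from individual compositions, cannot reach the exact constant, as you yourself observe (``still off by a factor,'' ``this balloons''), and the proposal ends by deferring the bookkeeping rather than resolving it.

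The missing idea in the paper is an \emph{averaging over all valid compositions at once}. Write the valid compositions as the rows of an $m\times(k+1)$ matrix $M$; since each row sums to $n-k$, the quantity $N_M=(k+1)-1+\frac{1}{m}\sum_{i,j}M_{ij}$ equals $n$, while $F_M=\sum_{\text{rows }r}C_r$ equals $f$ by Theorem \ref{Thm5}. The theorem then reduces to a purely combinatorial claim: for any matrix $D$ of positive integers in which every column contains an entry that is not $1$, one has $N_D\leq F_D+1$. This is proved by repeatedly decreasing entries $\geq 2$ (each such step lowers $F_D$ by at least $1$ and $N_D$ by only $1/a$) until every entry is $1$ or $2$ with exactly one $2$ per column, at which point the claim becomes the elementary inequality $\left(2+\frac{1}{a}\right)(u_1+\cdots+u_a)-1\leq 2^{u_1}+\cdots+2^{u_a}+1$, where $u_i$ is the number of $2$'s in row $i$. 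The averaging (the $\frac 1a$ factor) is what lets every descent and every valid composition contribute simultaneously, which is exactly the leverage your single-composition bounds lack; without some substitute for it, your outline does not yield $n\leq f+1$.
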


\begin{proof}
We know that there exist a positive integer $n$ and a permutation $\pi\in S_n$ such that $f=|s^{-1}(\pi)|$. Let us choose $n$ minimally. We will show that $n\leq f+1$. The theorem is easy when $f\in\{1,2\}$, so we may assume $f\geq 3$. This forces $n\geq 3$. 

Let $(q_{10},\ldots,q_{1k}),\ldots,(q_{m0},\ldots,q_{mk})$ be the valid compositions of $\pi$. \linebreak Form the $m\times (k+1)$ matrix  $M=(q_{i(j-1)})$ so that the rows of $M$ are precisely the valid compositions of $\pi$. If there is a column of $M$ whose entries are all $1$'s, then we can use Corollary \ref{Cor2} to see that there is a permutation in $S_{n-2}$ with fertility $f$, contradicting the minimality of $n$. Hence, every column of $M$ contains at least one number that is not $1$. 

Given an $a\times b$ matrix $D=(d_{ij})$ with positive integer entries, define \[N_D=b-1+\frac{1}{a}\sum_{i=1}^a\sum_{j=1}^b d_{ij}\] and \[F_D=\sum_{i=1}^aC_{(d_{i1},\ldots,d_{ib})}.\] From the fact that every valid composition of $\pi$ is a composition of $n-k$ into $k+1$ parts, we find that $N_M=n$. We know from Theorem \ref{Thm5} that $F_M=f$. Consequently, it suffices to prove the following claim. 

\noindent {\bf Claim:} If $D$ is a matrix with positive integer entries and every column of $D$ contains at least one number that is not $1$, then $N_D\leq F_D+1$.

To prove this claim, we first describe a useful reduction. We can choose an entry $d_{ij}\geq 2$ of $D$ and replace it with $d_{ij}-1$ to produce a new matrix $D'$. Note that $F_{D'}\leq F_D-1$ and $N_{D'}=N_D-1/a\geq N_D-1$. We can repeat this operation repeatedly until we are left with a matrix $D^*$ such that every entry of $D^*$ is either a $1$ or a $2$ and such that every column of $D^*$ contains exactly one $2$. If we performed the above operation $\ell$ times to obtain $D^*$ from $D$, then $F_{D^*}\leq F_D-\ell$ and $N_{D^*}=N_D-\ell/a\geq N_D-\ell$. It suffices to show that $N_{D^*}\leq F_{D^*}+1$. 

Let $u_i$ be the number of $2$'s in the $i^\text{th}$ row of $D^*$. Note that $u_1+\cdots+u_a=b$ because every column of $D^*$ has exactly one $2$. We have \[N_{D^*}=b-1+\frac{1}{a}(ab+u_1+\cdots+u_a)=\left(2+\frac 1a\right)(u_1+\cdots+u_a)-1\] and \[F_{D^*}+1=2^{u_1}+\cdots+2^{u_a}+1.\] We will show that 
\begin{equation}\label{Eq1}
\left(2+\frac 1a\right)(u_1+\cdots+u_a)-1\leq 2^{u_1}+\cdots+2^{u_a}+1
\end{equation} 
for every choice of nonnegative integers $u_1,\ldots,u_a$. 

If one of the integers $u_i$ is at least $3$, we can replace it by $u_i-1$. This has the effect of decreasing the expression on the left-hand side of \eqref{Eq1} by $2+1/a$ and decreasing the expression on the right-hand side by at least $4$. Therefore, it suffices to prove the inequality in \eqref{Eq1} after decreasing $u_i$ by $1$. We can repeatedly decrease the integers that are at least $3$ until every integer in the list $u_1,\ldots,u_a$ is at most $2$. In other words, it suffices to prove the inequality in \eqref{Eq1} under the assumption that $u_i\in\{0,1,2\}$ for all $i\in\{1,\ldots,a\}$. In this case, let $X_j=|\{i\in\{1,\ldots,a\}:u_i=j\}|$. With this notation, \eqref{Eq1} becomes \[\left(2+\frac{1}{X_0+X_1+X_2}\right)(X_1+2X_2)-1\leq X_0+2X_1+4X_2+1.\] This simplifies to \[\frac{-X_0+X_2}{X_0+X_1+X_2}\leq X_0+1,\] which obviously holds. 
\end{proof}

\section{Future Directions}

The primary objective of this article has been to gain an understanding of fertility numbers. Of course, the ultimate goal here is to obtain a complete description of all fertility numbers. This appears to be difficult, but there are less formidable problems whose solutions would still interest us greatly. For example, Theorem \ref{Thm3} leads us to ask the following question. 

\begin{question}\label{Quest1}
Does the set of fertility numbers have a natural density? If so, what is this natural density?  
\end{question}

We also have some conjectures spawning from our main theorems. 

\begin{conjecture}\label{Conj1}
There are infinitely many infertility numbers.
\end{conjecture}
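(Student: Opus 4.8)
Because the final statement is a conjecture rather than a proved result, what follows is a plan of attack, and I will flag where I expect it to stall. The plan is to combine the finite reduction of Theorem~\ref{Thm7} with the explicit fertility formula of Theorem~\ref{Thm5}: a positive integer $f$ is a fertility number if and only if $f=\sum_{(q_0,\ldots,q_k)\in\mathcal V(\pi)}C_{(q_0,\ldots,q_k)}$ for some permutation $\pi$, which by Theorem~\ref{Thm7} may be taken in $S_n$ with $n\le f+1$. Proving that infinitely many $f$ are infertility numbers therefore amounts to showing that the set of integers expressible as such a sum of products of Catalan numbers has infinitely many gaps. The first step is to pin down, as tightly as possible, which finite sets of compositions can occur as a valid composition set $\mathcal V(\pi)$: here Lemma~\ref{Lem2} describes $\mathcal V(\pi)$ by linear inequalities, and---more usefully in practice---the discussion preceding Theorem~\ref{Thm4} shows that $\mathcal V(\pi)$ is closed under interval domination. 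This closure is a strong structural restriction, and the whole strategy rests on squeezing consequences out of it.

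The second step is a case analysis organized by the multiset of types of the compositions in $\mathcal V(\pi)$---the partitions obtained by sorting their parts---exactly as in the proof of Theorem~\ref{Thm4}, but carried out for arbitrarily large $f$. Concretely, I would fix an infinite family of candidate infertility numbers lying in a prescribed ``gap'' (for instance, large integers congruent to $3$ modulo $4$ that are divisible by neither $27$ nor $95$, which Theorem~\ref{Thm3} does not already catch, or integers just below a convenient threshold), assume $f=|s^{-1}(\pi)|$ with $n$ minimal, use minimality together with Corollary~\ref{Cor2} to conclude that no column of the valid-composition matrix $M$ from the proof of Theorem~\ref{Thm7} consists entirely of $1$'s, and then argue that the compositions contributing small Catalan products (those with all parts in $\{1,2,3,4\}$) cannot by themselves reach $f$, while forcing, via interval domination, enough further valid compositions to overshoot or skip the gap. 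The congruence identity $5a+4b=f$ and the deduction $b\ge 3$ in the proof of Theorem~\ref{Thm4} are exactly the prototype one would try to generalize into an obstruction valid for infinitely many $f$ at once.

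The step I expect to be the real obstacle is controlling $\mathcal V(\pi)$ once the number of descents $k$ and the part sizes both grow. Interval domination determines $\mathcal V(\pi)$ very cleanly when only one or two distinct part-sizes appear, so that the relevant types form a short chain; but as soon as compositions with many parts equal to $2$, $3$, $4$, \ldots\ coexist, the poset of available types becomes wide, the implicitly defined parameters $e_j,\mu_j,\alpha_j$ of Lemma~\ref{Lem2} are hard to pin down, and nothing obvious stops the attainable sums from filling an entire interval of integers. Overcoming this seems to require either a genuinely new classification of valid composition sets---perhaps through the bijection of \cite{Defant3} with pairs of set partitions and acyclic orientations---or an additive-combinatorial argument showing that interval-domination-closed families of compositions have sums of Catalan products that omit infinitely many integers. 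A softer-looking but equally hard alternative would be to prove that the upper asymptotic density of the fertility numbers is strictly below $1$, which would yield the conjecture immediately, but on which the lower-density estimate of Theorem~\ref{Thm3} sheds no light.
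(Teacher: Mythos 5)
The statement you were given is Conjecture~\ref{Conj1}, and the paper offers no proof of it: it is stated as an open problem, motivated only by the fact (Theorem~\ref{Thm4}) that at least some infertility numbers exist. So there is no argument in the paper to compare yours against, and you were right not to manufacture one; your submission is a research plan, not a proof, and you say so explicitly. As a plan it is aligned with the paper's actual toolkit: the finite reduction of Theorem~\ref{Thm7}, the fertility formula of Theorem~\ref{Thm5}, the inequalities of Lemma~\ref{Lem2} and the interval-domination closure of $\mathcal V(\pi)$, and the $5a+4b=f$, $b\geq 3$ counting of Theorem~\ref{Thm4} as the prototype obstruction. Indeed the paper's own Section~4 suggests exactly this: extending the techniques of Theorem~\ref{Thm4} is proposed there as the plausible route to Conjectures~\ref{Conj1} and~\ref{Conj3}.

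The gap is the one you flag, and it is worth naming precisely why the Theorem~\ref{Thm4} argument does not scale. That proof leans on $f\leq 23$ at two separate points: first, the bound on $f$ forces any valid composition $q$ with $C_q$ odd to have all parts in $\{1,3\}$ with at most one $3$, which pins down $n=2k+3$ and hence restricts \emph{every} valid composition of $\pi$ to type $(3,1,\ldots,1)$ or $(2,2,1,\ldots,1)$; second, the congruence $a\equiv 3\pmod 4$ combined with $f\leq 23$ forces $a=3$, making the averaging/interval-domination step produce exactly the three compositions $Q_{u,v}$ needed to overshoot. For large $f$ neither step survives: compositions with parts $5,7,9,\ldots$ (odd Catalan numbers $42, 429,\ldots$) become admissible, $n-k$ is no longer determined, many types coexist, and interval domination only ever gives \emph{lower} bounds on how many compositions of intermediate type must be present --- it never rules values out. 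So nothing in the paper's machinery, nor in your plan, currently prevents the attainable sums $\sum C_{(q_0,\ldots,q_k)}$ from covering all sufficiently large integers; closing that is the genuinely new input required, and your two suggested escape routes (classifying interval-domination-closed families via the bijection of the set-partition/acyclic-orientation paper, or proving the upper density of fertility numbers is below $1$) are reasonable but are themselves open problems. In short: your assessment of the difficulty is accurate, but the statement remains unproven by both you and the paper.
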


The proof of Theorem \ref{Thm3} made use of the fact that $27$ and $95$ are fertility numbers. We saw in Theorem \ref{Thm4} that $27$ is the smallest fertility number that is congruent to $3$ modulo $4$, so we are led to make the following conjecture. 

\begin{conjecture}\label{Conj3}
The smallest fertility number that is congruent to $3$ modulo $4$ and is greater than $27$ is $95$. 
\end{conjecture}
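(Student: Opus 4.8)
The statement has two halves. That $95$ itself is a fertility number congruent to $3$ modulo $4$ is already recorded inside the proof of Theorem~\ref{Thm3}, where the computation $|s^{-1}(1243567)|=C_{(5,1)}+C_{(4,2)}+C_{(3,3)}=42+28+25=95$ appears; so the real work lies in showing that none of the sixteen values $f\in\{31,35,39,\ldots,91\}$ (those congruent to $3$ modulo $4$ and strictly between $27$ and $95$) is a fertility number. My plan is to push the method of Theorem~\ref{Thm4} from the range $f\leq 23$ up to the range $f\leq 91$. First I would reduce to a bounded situation: by Theorem~\ref{Thm7} any such $f$ is realized by some $\pi\in S_n$ with $n\leq f+1$, and choosing $\pi$ of minimal length I may invoke Corollary~\ref{Cor2} (equivalently the matrix argument in the proof of Theorem~\ref{Thm7}) to assume that no coordinate is identically $1$ across $\mathcal V(\pi)$, i.e.\ every column of the matrix of valid compositions contains an entry exceeding $1$.

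Next, since $f$ is odd, Theorem~\ref{Thm5} guarantees a valid composition $q$ with $C_q$ odd, which forces every part of $q$ to have the form $2^t-1$. Because $C_7=429>91\geq f$ and each $C_p$ is one of the positive summands of $f$, no part can be $7$ or larger; hence the parts of $q$ lie in $\{1,3\}$, with at most two $3$'s (three $3$'s give $C_q\geq 125$). Discarding the all-ones case, which forces $f=1$, the type of $q$ is either $(3,1^k)$ or $(3,3,1^{k-1})$. The type of $q$ pins down the common part-sum $S=n-k$ of all valid compositions (namely $k+3$ or $k+5$), and hence fixes the total ``excess'' $S-(k+1)\in\{2,4\}$ of every valid composition over the all-ones composition. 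This restricts the possible types of all valid compositions to short lists forced by a fixed excess: in the excess-$2$ case only $(3,1^k)$ and $(2,2,1^{k-1})$ occur, while in the excess-$4$ case only $(5,1^k)$, $(4,2,1^{k-1})$, $(3,3,1^{k-1})$, $(3,2,2,1^{k-2})$, and $(2,2,2,2,1^{k-3})$ occur.

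Writing $f$ as the corresponding integer combination of type-counts (with coefficients $C_p=5,4$ in the first case and $C_p=42,28,25,20,16$ in the second) and reducing modulo $4$ isolates the parity contribution of the odd types. I would then apply the interval-domination consequence of Lemma~\ref{Lem2}: componentwise arithmetic means of valid compositions that interval-dominate them are again valid. As in Theorem~\ref{Thm4}, averaging the higher-type compositions in pairs forces many lower-type compositions to exist; for instance $a$ compositions of type $(3,1^k)$ with their $3$'s in distinct coordinates force at least $\binom{a}{2}$ compositions of type $(2,2,1^{k-1})$. Combined with the congruence $a\equiv 3\pmod 4$ and the bound $5a\leq f\leq 91$, this already eliminates every excess-$2$ configuration except $a=3$, leaving $f=15+4b$ with $b\geq 3$; the no-identically-$1$-column condition then forces every one of the $k+1$ coordinates to be activated by some $2$ or $3$, giving the crude bound $k+1\leq 3+2b$ and hence $n=2k+3\leq 83$.

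The main obstacle is precisely that, unlike in Theorem~\ref{Thm4} where the small bound $f\leq 23$ made interval-domination decisive, for $f\leq 91$ these forced inequalities are necessary but not sufficient to pin the type-counts down: the configuration $a=3$ with $b$ type-$(2,2,1^{k-1})$ compositions survives for a range of $b$, and the excess-$4$ families are richer still. Closing these residual cases requires deciding exactly which position-patterns of $2$'s and $3$'s can coexist in $\mathcal V(\pi)$, which is governed by the precise inequalities of Lemma~\ref{Lem2} through the permutation-dependent parameters $e_j,\mu_j,\alpha_j$. I expect this step to demand either a delicate combinatorial classification of realizable valid-composition sets, or a finite but substantial computer search over permutations of length bounded as above. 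Converting the necessary conditions produced in the previous paragraph into a complete elimination of all sixteen candidate values is the crux, and its apparent difficulty is presumably why the statement is posed here as a conjecture rather than proved as a theorem.
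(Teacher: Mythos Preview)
The statement is labeled Conjecture~\ref{Conj3} in the paper, and the paper offers no proof of it whatsoever; it is listed in Section~4 among open problems. So there is no ``paper's own proof'' against which your attempt can be compared, and you yourself correctly diagnose in your final paragraph that what you have written is not a proof but an incomplete strategy.

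That said, your outline is a sound and natural extension of the argument of Theorem~\ref{Thm4}. The parity analysis (parts of an odd-$C_q$ composition must be of the form $2^t-1$, hence in $\{1,3\}$ since $C_7=429>91$), the excess dichotomy $n-k\in\{k+3,k+5\}$, the enumeration of admissible types in each case, and the use of interval domination to force $b\ge\binom{a}{2}$ are all correct. Your elimination of $a\ge 7$ in the excess-$2$ case is valid (e.g.\ $a=7$ already gives $f\ge 5\cdot 7+4\cdot 21=119$), and the residual case $a=3$, $f=15+4b$ with $b\ge 3$ is exactly the obstacle.

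The genuine gap is the one you name: for $b\in\{4,\dots,19\}$ in the excess-$2$ case, and for the several type-count vectors in the excess-$4$ case, nothing in Lemma~\ref{Lem2}'s interval-domination consequence alone rules out a realizing permutation. Closing these cases would require either a structural classification of which collections of compositions can arise as $\mathcal V(\pi)$, or an exhaustive (computer-assisted) search over permutations of bounded length. Neither is carried out here or in the paper, which is precisely why the statement remains a conjecture.
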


It is desirable to have more efficient methods for determining whether or not a given positive integer is a fertility number. It is possible that such a method could arise by extending the techniques used in the proof of Theorem \ref{Thm4}. Such methods could certainly be useful for answering the above conjectures. This also leads to the problem of improving Theorem \ref{Thm7}. Given a fertility number $f$, let $\mathcal N(f)$ denote the smallest positive integer $n$ such that there exists a permutation in $S_n$ with fertility $f$. Theorem \ref{Thm7} states that $\mathcal N(f)\leq f+1$ for every fertility number $f$. We would like to have better estimates for $\mathcal N(f)$. In particular, we have the following conjecture. 

\begin{conjecture}\label{Conj4}
We have \[\lim_{f\to\infty}\mathcal N(f)/f=0,\] where the limit is taken along the sequence of positive fertility numbers.  
\end{conjecture}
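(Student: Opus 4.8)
The plan is to treat two regimes separately. In the first, \emph{multiplicative}, regime I would push Corollary~\ref{Cor1} as far as possible. The key enabling observation is that every fertility number is realized by a permutation ending in its maximum at a cost of only two extra entries: for $\rho\in S_n$ one has $|s^{-1}(\widetilde\rho)|=|s^{-1}(\rho)|$, which is a one-line application of Proposition~\ref{Prop1} (the descent top $(1,n{+}1)$ of $\widetilde\rho$ forces a stationary hook to $(n{+}2,n{+}2)$, and the left factor $\sigma$ is order isomorphic to $213$, of fertility $1$). Feeding such a normalized realizer into Corollary~\ref{Cor1} gives the clean bound $\mathcal N(fg)\le\mathcal N(f)+\mathcal N(g)+3$. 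Iterating, if $f=f_1\cdots f_r$ is a product of fertility numbers each at most a constant $B$, then $r\le\log_2 f$ and $\mathcal N(f)=O(\log f)$, far better than needed. Thus the entire content of the conjecture lies with fertility numbers that are \emph{fertility-irreducible}, admitting no factorization $f=f_1f_2$ into fertility numbers with $f_1,f_2>1$; these include not only primes $p\equiv1\pmod4$ but also integers such as $6$, whose only nontrivial factor $3$ is an infertility number by Theorem~\ref{Thm4}.

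For the irreducible case the only available handle is Theorem~\ref{Thm5}: I must build a single permutation $\pi$ with $\sum_{q\in\mathcal V(\pi)}C_q=f$ exactly and with length $o(f)$. The guiding heuristic is geometric growth, $C_j\sim4^j/(j^{3/2}\sqrt\pi)$, so a length budget of $n$ should generate fertilities as large as $4^{\Theta(n)}$; the realistic target is therefore the much stronger $\mathcal N(f)=O(\operatorname{polylog} f)$. Concretely, I would represent $f$ in a ``Catalan numeration system,'' greedily subtracting the largest Catalan number not exceeding the running remainder to write $f=\sum_j c_jC_j$ with all coefficients bounded (since $C_{j+1}<4C_j$) and largest index $O(\log f)$. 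The aim is then to realize the multiset of summands $\{C_j\text{, with multiplicity }c_j\}$ as the valid compositions of a permutation assembled from the controlled families already in hand, namely $\xi_m$ and $\zeta_m$, the prepend operation with its rule $\mathcal V(1\oplus\pi)=\{(q_0+1,q_1,\ldots,q_r):(q_0,\ldots,q_r)\in\mathcal V(\pi)\}$, and the stationary-hook direct sums of Corollary~\ref{Cor1}, while using Corollary~\ref{Cor2} to trim the length back to its minimum.

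The structural difficulty I must confront is a rigidity hidden in Theorem~\ref{Thm5}: every valid composition of a fixed $\pi$ is a composition of the \emph{same} integer $n-k$ into the \emph{same} number $k+1$ of parts, so Catalan ``digits'' of different sizes cannot simply be laid down side by side inside one permutation. I would navigate this in two ways. First, I would group the digits of the representation into blocks, each realized by its own permutation at a common value of $(n-k,\,k+1)$, and then combine the blocks multiplicatively through Corollary~\ref{Cor1}; the additive freedom needed within a block (hitting a prescribed sum of Catalan products at fixed total size) would come from choosing which compositions to include. Second, to certify that a chosen family of compositions really is $\mathcal V(\pi)$ for some $\pi$, I would lean on the interval-domination criterion following Lemma~\ref{Lem2}, which both constrains candidate sets and helps complete them into genuine valid-composition sets.

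The step I expect to be the main obstacle is precisely the \emph{exact} realization for fertility-irreducible $f$. There the multiplicative tool is powerless, so the full weight falls on producing a single permutation of length $o(f)$ whose valid compositions sum, under $q\mapsto C_q$, to $f$ on the nose; approximations are useless. This amounts to converting the Catalan-base representation of $f$ into an \emph{actual} valid-composition multiset of small size, and here the two requirements interact in ways we do not yet control: the parts must obey the equal-sum and equal-length rigidity, and the resulting set must be closed under the interval domination forced by Lemma~\ref{Lem2}. Making this conversion succeed uniformly over all fertility numbers would seem to demand a sharper description of exactly which multisets of compositions arise as $\mathcal V(\pi)$ than Lemma~\ref{Lem2} currently supplies, and it is this gap that keeps the statement at the level of a conjecture.
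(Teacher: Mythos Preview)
The statement you are addressing is Conjecture~\ref{Conj4}, and the paper offers no proof of it; it is posed as an open problem in the ``Future Directions'' section. So there is nothing in the paper to compare your argument against, and the relevant question is simply whether your proposal actually proves the conjecture. It does not, and to your credit you say so in your final paragraph.

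What is correct in your write-up is the multiplicative part. Your observation that $|s^{-1}(\widetilde\rho)|=|s^{-1}(\rho)|$ is right (Proposition~\ref{Prop1} with the stationary hook from $(1,n{+}1)$ to $(n{+}2,n{+}2)$, and $\sigma\cong 213$ of fertility~$1$), and it does give the clean subadditive bound $\mathcal N(fg)\le \mathcal N(f)+\mathcal N(g)+3$. This is a genuinely useful inequality not stated in the paper, and it immediately yields $\mathcal N(f)=O(\log f)$ for any $f$ that factors into fertility numbers of bounded size.

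The gap is exactly where you locate it, but it is more serious than your middle paragraphs suggest. For a fertility-irreducible $f$ you propose to write $f$ in a Catalan numeration and then ``combine the blocks multiplicatively through Corollary~\ref{Cor1}.'' But Corollary~\ref{Cor1} multiplies fertilities; it cannot reassemble an additive decomposition $f=\sum c_jC_j$ into a single permutation of fertility $f$, and for irreducible $f$ a nontrivial product is unavailable by definition. So that step, as written, does not cohere. What would actually be needed is a direct construction of a single $\pi$ with $\mathcal V(\pi)$ prescribed so that $\sum_{q\in\mathcal V(\pi)} C_q=f$, subject to the rigidity you correctly note (all $q\in\mathcal V(\pi)$ have the same length and the same sum) and to closure under the interval-domination constraint from Lemma~\ref{Lem2}. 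No tool in the paper provides that inverse control over $\mathcal V(\pi)$, which is precisely why the statement remains a conjecture.
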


Finally, recall that Theorem \ref{Thm1} tells us that the product of two fertility numbers is again a fertility number. We would like to have additional methods for combining fertility numbers in order to produce new ones. 

\section{Acknowledgments}
The author was supported by a Fannie and John Hertz Foundation Fellowship and an NSF Graduate Research Fellowship.

\end{document}